  \newfont\fiverm{cmr5} 
\newtheorem{thm}{Theorem}[section]
\newtheorem{lem}[thm]{Lemma}
\newtheorem{cor}[thm]{Corollary}
\newtheorem{exmp}[thm]{Example}
\newtheorem{rmk}[thm]{Remark}
\newtheorem{thm-con}[thm]{Theorem-Conjecture}
\numberwithin{equation}{section}
\theoremstyle{definition}
\newtheorem{defn}[thm]{Definition}
\newcommand{\f}{\Bbb F}
\newcommand{\ii}{{\boldsymbol i}}
\newcommand{\jj}{{\boldsymbol j}}
\newcommand{\bt}{{\boldsymbol t}}
\newcommand{\btau}{{\boldsymbol \tau}}
\begin{document}

\title{Some Algebraic Questions about the Reed-Muller Code}

\author[Xiang-dong Hou]{Xiang-dong Hou}
\address{Department of Mathematics and Statistics,
University of South Florida, Tampa, FL 33620}
\email{xhou@usf.edu}

\keywords{
affine linear group, composition series, finite field, general linear group, modular representation, Reed-Muller code.
}

\subjclass[2010]{11T06, 11T71, 20C20, 20C33}

\begin{abstract}
Let $R_q(r,n)$ denote the $r$th order Reed-Muller code of length $q^n$ over $\Bbb F_q$. We consider two algebraic questions about the Reed-Muller code. Let $H_q(r,n)=R_q(r,n)/R_q(r-1,n)$. (1) When $q=2$, it is known that there is a ``duality'' between the actions of $\text{GL}(n,\Bbb F_2)$ on $H_2(r,n)$ and on $H_2(r',n)$, where $r+r'=n$. The result is false for a general $q$. However, we find that a slightly modified duality statement still holds when $q$ is a prime or $r<\text{char}\,\Bbb F_q$. (2) Let $\mathcal F(\Bbb F_q^n,\Bbb F_q)$ denote the $\Bbb F_q$-algebra of all functions from $\Bbb F_q^n$ to $\Bbb F_q$. It is known that when $q$ is a prime, the Reed-Muller codes $\{0\}=R_q(-1,n)\subset R_q(0,n)\subset\cdots\subset R_q(n(q-1),n)=\mathcal F(\Bbb F_q^n,\Bbb F_q)$ are the only $\text{AGL}(n,\Bbb F_q)$-submodules of $\mathcal F(\Bbb F_q^n,\Bbb F_q)$. In particular, $H_q(r,n)$ is an irreducible $\text{GL}(n,\Bbb F_q)$-module when $q$ is a prime. For a general $q$, $H_q(r,n)$ is not necessarily irreducible. We determine all its submodules and the factors in its composition series. The factors of the composition series of $H_q(r,n)$ provide an explicit family of irreducible representations of $\text{GL}(n,\Bbb F_q)$ over $\Bbb F_q$.
\end{abstract}

\maketitle

\section{Introduction}

Let $\mathcal F(\f_q^n,\f_q)$ denote the $\f_q$-algebra of all functions from $\f_q^n$ to $\f_q$. Each such function is uniquely represented by a polynomial $f\in\f_q[X_1,\dots,X_n]$ with $\deg_{X_i}f\le q-1$ for all $1\le i\le n$; polynomials of this form are called {\em reduced}. Each polynomial in $\f_q[X_1,\dots,X_n]$ is congruent to a reduced polynomial modulo the ideal $(X_1^q-X_1,\dots,X_n^q-X_n)$. For $0\le r\le n(q-1)$, the $r$th order {\em Reed-Muller code} of length $q^n$ over $\f_q$ is defined to be
\[
R_q(r,n)=\{f\in\mathcal F(\f_q^n,\f_q):\deg f\le r\}.
\]
In addition, we define $R_q(-1,n)=\{0\}$. There is a natural identification of $\mathcal F(\f_q^n,\f_q)$ with $\f_q^{q^n}$: Each $f\in\mathcal F(\f_q^n,\f_q)$ is identified with its vector of values $(f(x))_{x\in\f_q^n}\in\f_q^{q^n}$. Therefore, $R_q(r,n)$ is an $\f_q$-subspace of $\f_q^{q^n}$ and hence is a linear code of length $q^n$ over $\f_q$; this is the context in which the Reed-Muller code was initially discovered with $q=2$.

The {\em affine linear group} of degree $n$ over $\f_q$ is
\[
\text{AGL}(n,\f_q)=\Bigl\{\left[\begin{matrix} A&0\cr a&1\end{matrix}\right]:A\in\text{GL}(n,\f_q),\ a\in\f_q^n\Bigr\}<\text{GL}(n+1,\f_q).
\] 
The group $\text{AGL}(n,\f_q)$ acts on $\mathcal F(\f_q^n,\f_q)$ as follows: For $\sigma=\left[\begin{smallmatrix}A&0\cr a&1\end{smallmatrix}\right]\in \text{AGL}(n,\f_q)$ and $f\in\mathcal F(\f_q^n,\f_q)$,
\[
\sigma(f)=f((X_1,\dots,X_n)A+a),
\] 
that is, $\sigma(f)=f\circ\sigma$, where the $\sigma$ in $f\circ\sigma$ is treated as an affine transformation of $\f_q^n$.
Under this action, the Reed-Muller codes $R_q(r,n)$ become $\text{AGL}(n,\f_q)$-modules. (In fact, except for the extreme cases $r\in\{-1,0,n(q-1)-1,n(q-1)\}$, $\text{AGL}(n,\f_q)$ is the largest subgroup $G$ of the permutation group on $\f_q^n$ such that $R_q(r,n)$ is $G$-invariant; in coding theoretic terms, $\text{AGL}(n,\f_q)$ is the automorphism group of $R_q(r,n)$ except for the extreme cases \cite{Berger-Charpin-DM-1993}.) Interesting algebraic questions arise about these $\text{AGL}(n,\f_q)$-modules. The quotient module 
\begin{equation}\label{Hq}
H_q(r,n):=R_q(r,n)/R_q(r-1,n)
\end{equation} 
consists of reduced homogeneous polynomials of degree $r$ in $\f_q[X_1,\dots,X_n]$. Translations $(X_1,\dots,X_n)\mapsto(X_1+a_1,\dots,X_n+a_n)$ have no effect on $H_q(r,n)$. Hence, the $\text{AGL}(n,\f_q)$-structure of $H_q(r,n)$ induces a $\text{GL}(n,\f_q)$-module structure. In this paper, we consider two separate questions about the module $H_q(r,n)$.

Let $\Omega_{q,n}=\{0,1,\dots,q-1\}^n$. For $\ii=(i_1,\dots,i_n)\in \Omega_{q,n}$, define $|\ii|=i_1+\cdots+i_n$, $\bar\ii=(q-1-i_1,\dots,q-1-i_n)$, $X^{\ii}=X_1^{i_1}\cdots X_n^{i_n}\in\mathcal F(\f_q^n,\f_q)$, and, for $0\le r\le n(q-1)$, define $\Omega_{q,n,r}=\{\ii\in \Omega_{q,n}:|\ii|=r\}$. Then $H_q(r,n)$ has an $\f_q$-basis $\{X^\ii:\ii\in \Omega_{q,n,r}\}$, and the ``dual'' module $H_q(r',n)$, where $r+r'=n(q-1)$, has a ``dual'' basis $\{(-1)^nX^{\bar \ii}:\ii\in \Omega_{q,n,r}\}$. Let $(\ )^c:H_q(r,n)\to H_q(r',n)$ be the $\f_q$-map sending $X^{\ii}$ to $(-1)^nX^{\bar\ii}$. When $q=2$, it is known that $f,g\in H_2(r,n)$ are GL-equivalent (i.e., in the same $\text{GL}(n,\f_2)$-orbit) if and only if $f^c,g^c\in H_2(r',n)$ are GL-equivalent \cite[\S4]{Hou-DM-1996}. For a general $q$, this duality statement is not true; see Example~\ref{E2.4}. However, we will prove in Theorem~\ref{T2.2} that a slightly modified duality statement still holds when $q$ is a prime or $r<\text{char}\,\f_q$.

When $q=p$ is a prime, Mortimer \cite[Ch.~5]{Mortimer-thesis-1977} proved that the Reed-Muller codes
\[
\{0\}=R_p(-1,n)\subset R_p(0,n)\subset \cdots\subset R_p(n(p-1),n)=\mathcal F(\f_p^n,\f_p)
\]
are the only $\text{AGL}(n,\f_p)$-submodules of $\mathcal F(\f_p^n,\f_p)$; also see \cite[\S 5.5]{Assmus-Key-1998}. In particular, $H_p(r,n)$ is an irreducible $\text{GL}(n,\f_p)$-module. However, for a general $q$, $H_q(r,n)$ is not necessarily irreducible and its submodules have not been determined previously. Our second main result (Theorem~\ref{T3.8}) gives all $\text{GL}(n,\f_q)$-submodules of $H_q(r,n)$. Moreover, we determine the factors in the composition series of $H_q(r,n)$. Consequently, we obtain a class of irreducible modular representations of $\text{GL}(n,\f_q)$ over $\f_q$. There is a method for constructing all irreducible $\f_q\text{GL}(n,\f_q)$-modules using Weyl modules. The factors of the composition series of $H_q(r,n)$, though accounting for a small portion of all irreducible $\f_q\text{GL}(n,\f_q)$-modules, have the advantage that they are explicit and much easier to describe. More comments in this regard are given in Section~5.

The above questions and their solutions have practical applications in coding theory. The duality between $H_q(r,n)$ and $H_q(r',n)$, when it exists, allows people to study the homogeneous $q$-ary functions of degree $r'$ through the canonical homogeneous $q$-ary functions of degree $r$. The duality between $H_2(2,n)$ and $H_2(n-2,n)$ played an essential role in a simplified approach to the determination of the covering radius of $R_2(1,7)$ \cite{Hou-JCTA-1996, Mykkeltveit-IEEE-IT-1980}.  When $q=2$, the canonical forms in $H_2(3,n)$ are known for $n\le 9$ \cite{Brier-Langevin-ITW-2003, Brier-Langevin-web, Hou-DM-1996}. (Elements of $H_2(3,n)$ are reduced binary cubic forms in $n$ variables.) Recently, these results and the duality between $H_2(3,n)$ and $H_2(n-3,n)$ were used by Dougherty, Mauldin and Tiefenbruck \cite{Dougherty-Mauldin-Tiefenbruck-arXiv:2106.13910} to study the covering radius of $R_2(n-4,n)$ in $R_2(n-3,n)$. The $\text{GL}(n,\f_q)$-submodules of $H_q(r,n)$ correspond to the $\text{AGL}(n,\f_q)$-submodules between $R_q(r-1,n)$ and $R_q(r,n)$. $\text{AGL}(n,\f_q)$-submodules of $\mathcal F(\f_q^n,\f_q)$ are codes whose automorphism groups contain $\text{AGL}(n,\f_q)$; they belong to the class of affine invariant codes. For studies on other types of affine invariant codes, see \cite{Berger-Charpin-IEEE-IT-1996, Charpin-Levy-Dit-Vehel-JCTA-1994, Delsarte-IEEE-IT-1970, Hou-JCTA-2005, Hou-IJICT-2010, Kasami-Lin-Peterson-IC-1968}.
 In general, codes with large automorphism groups facilitate effective decoding schemes such as permutation decoding \cite{Key-McDonough-Mavron-DM-2010, Key-McDonough-Mavron-DM-2017, MacWilliams-BSTJ}.

To simplify writing, we will allow a few harmless abuses of notation. When a polynomial $f\in\f_q[X_1,\dots,X_n]$ is treated as an element of $\mathcal F(\f_q^n,\f_q)$, it is meant to be the reduced polynomial $f_1$ such that $f\equiv f_1\pmod{(X_1^q-X_1,\dots,X_n^q-X_n)}$. When a polynomial $f\in\f_q[X_1,\dots,X_n]$ of degree $\le r$ is treated as an element of $H_q(r,n)$, it is meant to be the coset $f_1+R_q(r-1,n)$, where $f_1$ is the reduced polynomial of $f$.

\section{A Duality Theorem}

Define an inner product $\langle\,\cdot\, ,\,\cdot\,\rangle$ on $\mathcal F(\f_q^n,\f_q)$ by
\begin{equation}\label{2.1}
\langle f,g\rangle=\sum_{x\in\f_q^n}f(x)g(x),\qquad f,g\in\mathcal F(\f_q^n,\f_q).
\end{equation}
For $\sigma\in\text{AGL}(n,\f_q)$ and $f,g\in\mathcal F(\f_q^n,\f_q)$, we have
\begin{equation}\label{adj} 
\langle\sigma(f),g\rangle=\sum_{x\in\f_q^n}f(\sigma(x))g(x)=\sum_{y\in\f_q^n}f(y)g(\sigma^{-1}(y))=\langle f,\sigma^{-1}(g)\rangle.
\end{equation}
Hence, when $\sigma$ is treated as an $\f_q$-linear transformation of $\mathcal F(\f_q^n,\f_q)$, its adjoint with respect to the inner product $\langle\,\cdot\, ,\,\cdot\,\rangle$ is $\sigma^{-1}$.
For $0\le r,r'\le n(q-1)$ with $r+r'=n(q-1)$, the above inner product induces a well-defined non-degenerate $\f_q$-bilinear map $\langle\,\cdot\, ,\,\cdot\,\rangle:H_q(r,n)\times H_q(r',n)\to \f_q$. In fact, if $f_1,f_2\in R_q(r,n)$ and $g_1,g_2\in R_q(r',n)$ are such that $f_1\equiv f_2\pmod{R_q(r-1,n)}$ and $g_1\equiv g_2\pmod{R_q(r'-1,n)}$, then it is easy to see that
\[
\langle f_1,g_1\rangle=\langle f_2,g_2\rangle.
\]
The map $\langle\,\cdot\, ,\,\cdot\,\rangle:H_q(r,n)\times H_q(r',n)\to \f_q$ will be referred to as the {\em pairing} between $H_q(r,n)$ and $H_q(r',n)$. The module $H_q(r,n)$ has an $\f_q$-basis 
\[
\frak B_r=\{X^{\ii}:\ii\in \Omega_{q,n,r}\},
\]
which is ordered by the lexicographic order on $\Omega_{q,n,r}$. 
The dual basis of $\frak B_r$ in $H_q(r',n)$ with respect to the pairing $\langle\,\cdot\, ,\,\cdot\,\rangle$ is 
\[
\frak B'_r=\{(-1)^nX^{\bar\ii}:\ii\in \Omega_{q,n,r}\},
\]
as one can easily see that for $\ii,\jj\in\Omega_{q,n,r}$,
\begin{equation}\label{2.2}
\langle X^{\ii},(-1)^nX^{\bar\jj}\rangle=
\begin{cases}
1&\text{if}\ \ii=\jj,\cr
0&\text{if}\ \ii\ne\jj.
\end{cases}
\end{equation}
For $A\in\text{GL}(n,\f_q)$, the action of $A$ on $f\in\mathcal F(\f_q^n,\f_q)$ is
\[
A(f)=f((X_1,\dots,X_n)A).
\]
Let $\mathcal A_r(A)$ denote the matrix of $A$ (as an $\f_q$-linear transformation of $H_q(r,n)$) with respect to the basis $\frak B_r$. Then by \eqref{2.2}, $\mathcal A_r(A)=(\sigma_{\ii,\jj}(A))_{\ii,\jj\in \Omega_{q,n,r}}$, where
\begin{equation}\label{sigmaij}
\sigma_{\ii,\jj}(A)=\langle A(X^{\ii}),(-1)^nX^{\bar\jj}\rangle.
\end{equation}
Let $A=(a_{ts})_{1\le t,s\le n}$ and $\ii=(i_1,\dots,i_n), \jj=(j_1,\dots,j_n)\in \Omega_{q,n,r}$. Then 
\begin{align*}
\sigma_{\ii,\jj}(A)\,&=(-1)^n\sum_{x=(x_1,\dots,x_n)\in\f_q^n}(a_{11}x_1+\cdots+a_{n1}x_n)^{i_1}\cdots(a_{1n}x_1+\cdots+a_{nn}x_n)^{i_n}\cr
&\kern10em \cdot x_1^{q-1-j_1}\cdots x_n^{q-1-j_n}\cr
&=(-1)^n\sum_{x\in\f_q^n}\Bigl(\sum_{i_{11}+\cdots+i_{1n}=i_1}\binom{i_1}{i_{11},\dots,i_{1n}}(a_{11}x_1)^{i_{11}}\cdots(a_{n1}x_n)^{i_{1n}}\Bigr)\cdots\cr
&\kern6em \Bigl(\sum_{i_{n1}+\cdots+i_{nn}=i_n}\binom{i_n}{i_{n1},\dots,i_{nn}}(a_{1n}x_1)^{i_{n1}}\cdots(a_{nn}x_n)^{i_{nn}}\Bigr)\cr
&\kern6em \cdot x_1^{q-1-j_1}\cdots x_n^{q-1-j_n},
\end{align*}
where
\[
\binom{i_s}{i_{s1},\dots,i_{sn}}=\frac{i_s!}{i_{s1}!\cdots i_{sn}!}
\]
is the multinomial coefficient. Therefore,
\begin{align*}
\sigma_{\ii,\jj}(A)\,
&=(-1)^n\sum_{\substack{(i_{st})\cr \sum_ti_{st}=i_s,\,1\le s\le n}}\binom{i_1}{i_{11},\dots,i_{1n}}\cdots\binom{i_n}{i_{n1},\dots,i_{nn}}\Bigl(\prod_{s,t}a_{ts}^{i_{st}}\Bigr)\cr
&\kern5em \cdot\sum_{x\in\f_q^n}x_1^{i_{11}+\cdots+i_{n1}+q-1-j_1}\cdots x_n^{i_{1n}+\cdots+i_{nn}+q-1-j_n}.
\end{align*}
In the above, 
\begin{align*}
&\sum_{x\in\f_q^n}x_1^{i_{11}+\cdots+i_{n1}+q-1-j_1}\cdots x_n^{i_{1n}+\cdots+i_{nn}+q-1-j_n}\cr
&=\begin{cases}
(-1)^n&\text{if\ $\displaystyle\sum_s i_{st}+q-1-j_t$ is $>0$\ and\ $\equiv 0\ \text{mod}\;(q-1)$ for all $1\le t\le n$},\cr
0&\text{otherwise}.
\end{cases}
\end{align*}
Hence
\[
\sigma_{\ii,\jj}(A)
=\sum_{(i_{st})}\binom{i_1}{i_{11},\dots,i_{1n}}\cdots\binom{i_n}{i_{n1},\dots,i_{nn}}\Bigl(\prod_{s,t}a_{ts}^{i_{st}}\Bigr),
\]
where the sum is over all the matrices $(i_{st})$ subject to the conditions
\begin{equation}\label{cond-ist}
\begin{cases}
i_{st}\ge 0\ \text{for all}\ 1\le s,t\le n,\vspace{0.4em}\cr
\displaystyle\sum_t i_{st}=i_s\ \text{for all}\ 1\le s\le n,\vspace{0.4em}\cr
\displaystyle\sum_s i_{st}\equiv j_t\ \text{mod}\;(q-1)\ \text{and}\ >j_t-q+1\ \text{for all}\ 1\le t\le n.
\end{cases}
\end{equation}
The first and third conditions in \eqref{cond-ist} imply that $\sum_s i_{st}\ge j_t$ for all $1\le t\le n$. (Look at the cases $0\le j_t<q-1$ and $j_t=q-1$ separately.) 
Since $|\ii|=|\jj|$, the conditions $\sum_ti_{st}=i_s$ (for all $s$) and $\sum_si_{st}\ge j_t$ (for all $t$) imply that $\sum_si_{st}=j_t$ for all $t$. Let 
\[
M(\ii,\jj)=\Bigl\{(i_{st})_{1\le s,t\le n}:\sum_ti_{st}=i_s,\ 1\le s\le n;\ \sum_si_{st}=j_t,\ 1\le t\le n\Bigr\};
\]
see Figure~\ref{F1}. Then 
\begin{equation}\label{sigma}
\sigma_{\ii,\jj}(A)=\sum_{(i_{st})\in M(\ii,\jj)}\binom{i_1}{i_{11},\dots,i_{1n}}\cdots\binom{i_n}{i_{n1},\dots,i_{nn}}\Bigl(\prod_{s,t}a_{ts}^{i_{st}}\Bigr).
\end{equation}
For $\ii=(i_1,\dots,i_n)\in \Omega_{q,n,r}$, define $\ii!:=i_1!\cdots i_n!$. Let $D_r$ be the $\Omega_{q,n,r}\times \Omega_{q,n,r}$ diagonal matrix whose $(\ii,\ii)$ entry is $\ii!$.

\begin{figure}
\[
\beginpicture
\setcoordinatesystem units <1.4em,1.4em> point at 0 0

\arrow <5pt> [.2,.67] from 7.5 0 to 8.5 0 
\arrow <5pt> [.2,.67] from 7.5 6 to 8.5 6
\arrow <5pt> [.2,.67] from 0 -1.5 to 0 -2.5
\arrow <5pt> [.2,.67] from 6 -1.5 to 6 -2.5  
\setlinear
\plot -1 -1  7 -1  7 7  -1 7  -1 -1 /

\put {$i_{n1}$} at 0 0
\put {$\cdot$} at 2 0
\put {$\cdot$} at 3 0
\put {$\cdot$} at 4 0
\put {$i_{nn}$} at 6 0
\put {$i_{11}$} at 0 6
\put {$\cdot$} at 2 6
\put {$\cdot$} at 3 6
\put {$\cdot$} at 4 6
\put {$i_{1n}$} at 6 6
\put {$\cdot$} at 0 2
\put {$\cdot$} at 0 3
\put {$\cdot$} at 0 4
\put {$\cdot$} at 6 2
\put {$\cdot$} at 6 3
\put {$\cdot$} at 6 4
\put {$i_n$} at 9 0
\put {$i_1$} at 9 6
\put {$j_1$} at 0 -3
\put {$j_n$} at 6 -3
\endpicture
\]
\caption{Matrices in $M(\ii,\jj)$}\label{F1}
\end{figure}

\begin{lem}\label{L2.1}
For $A\in\text{\rm GL}(n,\f_q)$, we have
\begin{equation}\label{AD=DA}
\mathcal A_r(A^T)D_r=D_r\mathcal A_r(A)^T.
\end{equation}
\end{lem}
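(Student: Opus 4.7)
The plan is to verify the identity entry-by-entry using the explicit formula \eqref{sigma} for $\sigma_{\ii,\jj}(A)$. Since $D_r$ is diagonal with diagonal entries $\ii!$ indexed by $\ii\in\Omega_{q,n,r}$, the $(\ii,\jj)$-entry of $\mathcal A_r(A^T)D_r$ is simply $\jj!\,\sigma_{\ii,\jj}(A^T)$, while the $(\ii,\jj)$-entry of $D_r\mathcal A_r(A)^T$ is $\ii!\,\sigma_{\jj,\ii}(A)$. Thus \eqref{AD=DA} reduces to establishing the scalar identity
\[
\jj!\,\sigma_{\ii,\jj}(A^T)=\ii!\,\sigma_{\jj,\ii}(A)\qquad\text{for all }\ii,\jj\in\Omega_{q,n,r}.
\]

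Next, I would expand both sides using \eqref{sigma}. For the left side, note that the $(t,s)$-entry of $A^T$ is $a_{st}$, so
\[
\sigma_{\ii,\jj}(A^T)=\sum_{(i_{st})\in M(\ii,\jj)}\frac{\ii!}{\prod_{s,t}i_{st}!}\prod_{s,t}a_{st}^{i_{st}},
\]
after combining the multinomial coefficients $\prod_s\binom{i_s}{i_{s1},\dots,i_{sn}}=\ii!/\prod_{s,t}i_{st}!$. Multiplying by $\jj!$ gives the common numerator $\ii!\jj!$. For the right side, expanding $\sigma_{\jj,\ii}(A)$ in the same way produces
\[
\ii!\,\sigma_{\jj,\ii}(A)=\sum_{(k_{st})\in M(\jj,\ii)}\frac{\ii!\jj!}{\prod_{s,t}k_{st}!}\prod_{s,t}a_{ts}^{k_{st}}.
\]

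The key step, and essentially the only non-bookkeeping observation, is that transposition $(i_{st})\mapsto(i_{ts})$ is a bijection $M(\ii,\jj)\to M(\jj,\ii)$: the row-sum and column-sum conditions in the definition of $M$ get swapped, precisely matching the two constraint sets. Under this bijection, setting $k_{st}=i_{ts}$ yields $\prod_{s,t}k_{st}!=\prod_{s,t}i_{st}!$ and, after the relabeling $(s,t)\leftrightarrow(t,s)$ in the product, $\prod_{s,t}a_{ts}^{k_{st}}=\prod_{s,t}a_{st}^{i_{st}}$. Hence the two sums above are termwise equal, proving the scalar identity and therefore \eqref{AD=DA}.

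There is no real obstacle here; the lemma is really a compatibility statement between the combinatorial index set $M(\ii,\jj)$ under transposition and the way the factorials $\ii!$ recombine with the multinomial coefficients inside \eqref{sigma}. The only thing to be careful about is bookkeeping of the indices $s$ versus $t$ when passing from $A$ to $A^T$, and the fact that the sums in \eqref{sigma} are already known (from the derivation preceding \eqref{sigma}) to be supported exactly on $M(\ii,\jj)$, so no boundary conditions need to be re-examined.
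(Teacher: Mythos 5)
Your proof is correct and follows essentially the same route as the paper: reduce to an entry\mbox{-}wise scalar identity via \eqref{sigma}, and then use the transposition bijection $M(\ii,\jj)\leftrightarrow M(\jj,\ii)$ to match terms, since both sides of the scalar identity equal $\ii!\,\jj!\big/\prod_{s,t}i_{st}!$. The one thing the paper makes explicit that you elide is the remark that, because the $a_{ts}$ may be treated as indeterminates and \eqref{sigma} is a universal integer\mbox{-}coefficient polynomial identity, it suffices to verify \eqref{AD=DA} over $\Bbb Q[\{a_{ts}\}]$; this is what licenses manipulating quotients of factorials freely before reducing modulo $p$, and is worth stating rather than relying on the reader to observe that the two sides of your scalar identity are integers.
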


\begin{proof}
Let $A=(a_{ts})_{1\le t,s\le n}$. We treat $a_{ts}$ as independent indeterminates and thus we only have to prove \eqref{AD=DA} over the ring $\Bbb Z[\{a_{ts}:1\le t,s\le n\}]$. Therefore, we only have to prove \eqref{AD=DA} over the ring $\Bbb Q[\{a_{ts}:1\le t,s\le n\}]$. 

For $\ii=(i_1,\dots,i_n), \jj=(j_1,\dots,j_n)\in \Omega_{q,n,r}$, by \eqref{sigma}, the $(\jj,\ii)$ entry of $\mathcal A_r(A^T)D_r$ is
\begin{align*}
\sigma_{\jj,\ii}(A^T)\cdot\ii!\,&=\ii! \sum_{(j_{st})\in M(\jj,\ii)}\binom{j_1}{j_{11},\dots,j_{1n}}\cdots\binom{j_n}{j_{n1},\dots,j_{nn}}\Bigl(\prod_{s,t}a_{st}^{j_{st}}\Bigr)\cr
&=\ii!\,\jj!\sum_{(j_{st})\in M(\jj,\ii)}\prod_{s,t}\frac{a_{st}^{j_{st}}}{j_{st}!}\cr
&=\ii!\,\jj!\sum_{(j_{ts})\in M(\ii,\jj)}\prod_{s,t}\frac{a_{ts}^{j_{ts}}}{j_{ts}!}\cr
&=\ii!\,\jj!\sum_{(i_{st})\in M(\ii,\jj)}\prod_{s,t}\frac{a_{ts}^{i_{st}}}{i_{st}!}\kern 9.5em (i_{st}=j_{ts})\cr
&=\jj! \sum_{(i_{st})\in M(\ii,\jj)}\binom{i_1}{i_{11},\dots,i_{1n}}\cdots\binom{i_n}{i_{n1},\dots,i_{nn}}\Bigl(\prod_{s,t}a_{ts}^{i_{st}}\Bigr)\cr
&=\jj!\cdot\sigma_{\ii,\jj}(A),
\end{align*}
which is the $(\ii,\jj)$ entry of $\mathcal A_r(A)D_r$. Hence
\[
\mathcal A_r(A^T)D_r=(\mathcal A_r(A)D_r)^T=D_r\mathcal A_r(A)^T.
\]
\end{proof}

For $0\le r,r'\le n(q-1)$ with $r+r'=n(q-1)$, let 
\[
\theta:H_q(r,n)\to H_q(r',n)
\]
be the $\f_q$-linear map sending $X^{\ii}$ to $\ii!\,(-1)^nX^{\bar\ii}$, $\ii\in \Omega_{q,n,r}$.

\begin{thm}\label{T2.2}
Let $0\le r,r'\le n(q-1)$ be such that $r+r'=n(q-1)$. Then for each $A\in\text{\rm GL}(n,\f_q)$, the following diagram commutes.

\[
\beginpicture
\setcoordinatesystem units <2em,2em> point at 0 0

\arrow <5pt> [.2,.67] from 1.5 0 to 4.5 0 
\arrow <5pt> [.2,.67] from 1.5 3 to 4.5 3
\arrow <5pt> [.2,.67] from 0 2.5 to 0 0.5 
\arrow <5pt> [.2,.67] from 6 2.5 to 6 0.5  

\put {$H_q(r',n)$} at 0 0
\put {$H_q(r',n)$} at 6 0
\put {$H_q(r,n)$} at 0 3
\put {$H_q(r,n)$} at 6 3
\put {$\scriptstyle A$} [b] at 3 3.2
\put {$\scriptstyle (A^{-1})^T$} [b] at 3 0.2
\put {$\scriptstyle \theta$} [r] at -0.2 1.5
\put {$\scriptstyle \theta$} [l] at 6.2 1.5

\endpicture
\]
\vspace{0.1em}

\noindent In particular, when $q$ is a prime or $r<\text{\rm char}\,\f_q$, $f,g\in H_q(r,n)$ are GL-equivalent if and only if $\theta(f),\theta(g)\in H_q(r',n)$ are GL-equivalent.
\end{thm}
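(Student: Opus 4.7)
My plan is to verify the commutativity of the diagram on basis elements, reducing it to an entrywise identity that will follow by combining Lemma~\ref{L2.1} with the adjoint property~\eqref{adj}. First, I unwind the diagram on basis vectors: since $\theta(X^{\ii})=\ii!\,(-1)^nX^{\bar\ii}$, commutativity is equivalent to checking $\theta(A(X^{\ii}))=(A^{-1})^T(\theta(X^{\ii}))$ for every $\ii\in\Omega_{q,n,r}$. Expanding both sides in the basis $\frak B_{r'}$ of $H_q(r',n)$ and comparing the coefficient of $X^{\bar\jj}$ (for $\jj\in\Omega_{q,n,r}$), the claim reduces to the scalar identity
\begin{equation*}
\jj!\,\sigma_{\ii,\jj}(A)=\ii!\,\sigma_{\bar\ii,\bar\jj}\bigl((A^{-1})^T\bigr)\qquad\text{for all}\ \ii,\jj\in\Omega_{q,n,r}.
\end{equation*}

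I would then produce this identity in two steps. Applying Lemma~\ref{L2.1} with $A^T$ in place of $A$ gives $\mathcal A_r(A)D_r=D_r\mathcal A_r(A^T)^T$, whose $(\ii,\jj)$-entry reads $\jj!\,\sigma_{\ii,\jj}(A)=\ii!\,\sigma_{\jj,\ii}(A^T)$. It then remains to establish a general ``bar-duality'' for $\sigma$-entries: for every $B\in\text{GL}(n,\f_q)$,
\[
\sigma_{\jj,\ii}(B)=\sigma_{\bar\ii,\bar\jj}(B^{-1}),
\]
where the right-hand side is interpreted through the analogue of~\eqref{sigmaij} on $H_q(r',n)$. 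This I would verify by rewriting $\sigma_{\jj,\ii}(B)=\langle B(X^{\jj}),(-1)^nX^{\bar\ii}\rangle$ via~\eqref{adj} as $\langle X^{\jj},B^{-1}((-1)^nX^{\bar\ii})\rangle$, and then invoking the symmetry of the inner product together with $\bar{\bar\jj}=\jj$. Specializing to $B=A^T$ closes the reduction. I expect this bar-duality step to be the main obstacle, since it requires recognizing that the $\sigma$-entries on $H_q(r,n)$ and $H_q(r',n)$ are matched by the involution $\ii\mapsto\bar\ii$ together with matrix inversion---information that is not transparent in the combinatorial formula~\eqref{sigma} but drops out cleanly from~\eqref{adj}.

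For the final biconditional, observe that $\theta$ has matrix $D_r$, so it is an isomorphism precisely when $\ii!\ne 0$ in $\f_q$ for every $\ii\in\Omega_{q,n,r}$. When $q=p$ is prime each component $i_t\le p-1$, and when $r<p=\text{char}\,\f_q$ each $i_t\le r<p$; in either case all factorials are nonzero mod $p$. With $\theta$ invertible, the commuting diagram immediately yields both directions: $g=A(f)$ forces $\theta(g)=(A^{-1})^T(\theta(f))$, and conversely $\theta(g)=B(\theta(f))$ yields $g=((B^{-1})^T)(f)$ after applying $\theta^{-1}$.
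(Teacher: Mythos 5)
Your argument is correct and is essentially the paper's proof, just written entrywise in the $\sigma$-coefficients rather than matrixwise: your ``bar-duality'' $\sigma_{\jj,\ii}(B)=\sigma_{\bar\ii,\bar\jj}(B^{-1})$ is exactly the paper's observation (via~\eqref{adj} and dual bases) that the matrix of $B$ on $H_q(r',n)$ with respect to $\frak B_r'$ is $\mathcal A_r(B^{-1})^T$, and the remaining algebra reduces to Lemma~\ref{L2.1} in both treatments. The justification of the final biconditional (invertibility of $D_r$ when $q$ is prime or $r<\text{char}\,\f_q$) is also the intended one.
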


\begin{proof}
First, the matrix of the $\f_q$-linear map $\theta\circ A:H_q(r,n)\to H_q(r',n)$ with respect to the basis $\frak B_r$ of the domain and the basis $\frak B_r'$ of the target is $\mathcal A_r(A)D_r$.

On the other hand, let $B=(A^{-1})^T$. The matrix of $B^{-1}:H_q(r,n)\to H_q(r,n)$ with respect to the basis $\frak B_r$ is $\mathcal A_r(B^{-1})$. By \eqref{adj}, the adjoint of $B^{-1}:H_q(r,n)\to H_q(r,n)$ is $B:H_q(r',n)\to H_q(r',n)$. Thus the matrix of $B:H_q(r',n)\to H_q(r',n)$ with respect to the basis $\frak B_r'$ is $\mathcal A_r(B^{-1})^T$ \cite[Chapter~XIII, Corollary~7.4]{Lang-2002}.
Hence the matrix of $B\circ\theta:H_q(r,n)\to H_q(r',n)$ with respect to the basis $\frak B_r$ of the domain and the basis $\frak B_r'$ of the target is $D_r(\mathcal A_r(B^{-1}))^T$. Therefore, it remains to verify that $D_r(\mathcal A_r(B^{-1}))^T=\mathcal A_r(A)D_r$. By Lemma~\ref{L2.1}, we have
\[
D_r(\mathcal A_r(B^{-1}))^T=D_r(\mathcal A_r(A^T))^T=\mathcal A_r(A)D_r.
\]
\end{proof}

\begin{rmk}\label{R2.3}\rm
(i) If $q$ is not a prime and $r\ge\text{char}\,\f_q$, then the map $\theta$ in Theorem~\ref{T2.2} is not invertible. Hence the ``if'' part of the second statement in Theorem~\ref{T2.2} is false.

\medskip

(ii) The special case of Theorem~\ref{T2.2} with $q=2$ was first proved in \cite{Hou-DM-1996}. In this case, $\theta=(\ )^c$ and $\mathcal A_r(A)$ is the $r$th {\em compound matrix} of $A$, which is a critical fact that the proof in \cite{Hou-DM-1996} relied on. (For the definition and properties of compound matrices, see \cite[Ch. V]{Wedderburn-1964}.) However, when $q>2$, the connection with compound matrices no longer exists. For this reason, the proof of Theorem~\ref{T2.2} given above is not a simple adaptation of the proof of the special case $q=2$ in \cite{Hou-DM-1996}.

\medskip
(iii) If $q$ is not a prime and $r\ge\text{char}\,\f_q$, unlike $\theta$, $(\ )^c:H_q(r,n)\to H_q(r',n)$ is still invertible. Can we expect the second statement in Theorem~\ref{T2.2} to be true with $\theta$ replaced by $(\ )^c$? The following example gives a negative answer. 
\end{rmk}

\begin{exmp}\label{E2.4}\rm
Let $q=4$, $n=2$, $r=4$, and $f=X_1^3X_2$, $g=X_1^3X_2+X_1^2X_2^2+X_1X_2^3\in H_4(4,2)$. Let $\sim$ denote GL-equivalence. Then
\[
f\sim(X_1+X_2)^3X_2=(X_1^3+X_1^2X_2+X_1X_2^2+X_2^3)X_2=X_1^3X_2+X_1^2X_2^2+X_1X_2^3=g.
\]
However, in $H_4(2,2)$, $f^c=X_2^2$ and $g^c=X_2^2+X_1X_2+X_1^2$, which are not GL-equivalent since $f^c$ is a quadratic form of rank $1$ and $g^c$ is a quadratic form of rank $2$.
\end{exmp}

\section{$\text{GL}(n,\f_q)$-Submodules of $H_q(r,n)$}

Let $1\le r\le n(q-1)$. The objective of this section is to determine all $\text{GL}(n,\f_q)$-submodules of $H_q(r,n)$.  Let $q=p^m$, where $p=\text{\rm char}\,\f_q$. Let $M$ be a nonzero $\text{GL}(n,\f_q)$-module in $H_q(r,n)$.

\begin{lem}\label{L2}
Assume that
\[
X_n^{q-2}a_{q-2}+X_n^{q-3}a_{q-3}+\cdots+a_0\in M,
\]
where $a_i\in H_q(r-i,n-1)$. Then $X_n^ia_i\in M$ for all $0\le i\le q-2$.
\end{lem}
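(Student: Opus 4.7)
The plan is to act on the given element by the diagonal matrices $A_c = \mathrm{diag}(1,\ldots,1,c) \in \mathrm{GL}(n,\f_q)$ with $c \in \f_q^*$, and then extract each $X_n^i a_i$ by inverting a Vandermonde system.

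First I would observe that since $a_i \in H_q(r-i,n-1)$ involves only $X_1,\ldots,X_{n-1}$, the action of $A_c$ (which sends $(X_1,\ldots,X_n)\mapsto(X_1,\ldots,X_{n-1},cX_n)$) scales the layer $X_n^i a_i$ by $c^i$. Thus, letting $f=\sum_{i=0}^{q-2}X_n^i a_i \in M$, we get
\[
A_c(f)=\sum_{i=0}^{q-2}c^i\,X_n^i a_i\in M \quad\text{for every } c\in\f_q^*.
\]
As each $X_n^i a_i$ (with $0\le i\le q-2$ and $a_i$ itself reduced in $X_1,\ldots,X_{n-1}$) is already a reduced representative in $H_q(r,n)$, no cancellation modulo $R_q(r-1,n)$ occurs within these layers.

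Next I would assemble the $q-1$ elements $\{A_c(f):c\in\f_q^*\}\subset M$ into a single linear system whose coefficient matrix is the $(q-1)\times(q-1)$ Vandermonde matrix $V=(c^i)_{c\in\f_q^*,\,0\le i\le q-2}$. Since the elements of $\f_q^*$ are pairwise distinct, $\det V=\prod_{c\ne c'}(c-c')\ne 0$ in $\f_q$, so $V$ is invertible over $\f_q$. Hence, for every fixed $i\in\{0,1,\ldots,q-2\}$, there exist scalars $\lambda_{i,c}\in\f_q$ with
\[
X_n^i a_i=\sum_{c\in\f_q^*}\lambda_{i,c}\,A_c(f),
\]
and the right-hand side lies in $M$ since $M$ is a $\mathrm{GL}(n,\f_q)$-submodule. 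This yields $X_n^i a_i\in M$ for every $0\le i\le q-2$, completing the argument.

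There is essentially no obstacle here beyond recognizing the right subgroup to act by; the invertibility of the Vandermonde matrix over $\f_q$ is automatic from the distinctness of the elements of $\f_q^*$, and the fact that $a_i$ is independent of $X_n$ makes the action of $A_c$ diagonalize the decomposition by the exponent of $X_n$.
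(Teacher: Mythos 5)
Your proof is correct and is essentially the same as the paper's: both apply the diagonal substitutions $X_n\mapsto cX_n$ for $c\in\f_q^*$ and then invert the resulting $(q-1)\times(q-1)$ Vandermonde system to isolate each layer $X_n^i a_i$. No substantive difference.
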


\begin{proof}
Let $f(X_1,\dots,X_n)$ denote the polynomial in the lemma. For all $c\in\f_q^*$, we have 
\[
f(X_1,\dots,X_{n-1},cX_n)=(c^{q-2},c^{q-3},\dots,1)\left[\begin{matrix}
X_n^{q-2}a_{q-2}\cr X_n^{q-3}a_{q-3}\cr \vdots\cr a_0\end{matrix}\right]\in M.
\]
The rows $(c^{q-2},c^{q-3},\dots,1)$, $c\in\f_q^*$, are linearly independent since they are from a Vandermonde matrix. Thus
$X_n^ia_i\in M$ for all $0\le i\le q-2$.
\end{proof}

\begin{lem}\label{L4}
Assume that $X_1^{i_1}X_2^{i_2}\cdots X_n^{i_n}\in M$, where $(i_1,\dots,i_n)\in\Omega_{q,n,r}$, and write 
\[
i_1=c_0p^0+\cdots +c_{m-1}p^{m-1}, 
\]
where $0\le c_j\le p-1$. If $c_k>0$, then $X_1^{i_1-p^k}X_2^{i_2+p^k}\cdots X_n^{i_n}\in M$.
\end{lem}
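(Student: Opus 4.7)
The plan is to act on $X^\ii$ by a one-parameter family of shears and then isolate the desired monomial via a Vandermonde-extraction argument.

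For each $c \in \f_q$, let $A_c \in \text{GL}(n, \f_q)$ be the shear whose associated substitution is $(X_1, X_2, \ldots, X_n) \mapsto (X_1 + cX_2, X_2, \ldots, X_n)$. Since $M$ is $\text{GL}(n,\f_q)$-invariant, $A_c(X^\ii) \in M$ for every $c \in \f_q$, and the binomial theorem gives
\[
A_c(X^\ii) = (X_1 + cX_2)^{i_1} X_2^{i_2} X_3^{i_3} \cdots X_n^{i_n} = \sum_{j=0}^{i_1} \binom{i_1}{j} c^j X_1^{i_1-j} X_2^{i_2+j} X_3^{i_3} \cdots X_n^{i_n}.
\]
Working inside $H_q(r,n) = R_q(r,n)/R_q(r-1,n)$, any term with $i_2 + j \ge q$ reduces modulo $X_2^q - X_2$ to a polynomial of strictly smaller degree and hence vanishes in the quotient. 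Setting $J = \{j : 0 \le j \le \min(i_1, q-1-i_2)\}$, the displayed identity therefore truncates, in $H_q(r,n)$, to a sum over $j \in J$.

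Next I would extract the $j = p^k$ term. Since $k \le m-1$ forces $p^k \le q-1$, one has $J \subseteq \{0, 1, \ldots, q-1\}$, and the $q \times q$ Vandermonde matrix $(c^j)_{0 \le j \le q-1,\, c \in \f_q}$ is invertible. Hence one can choose coefficients $\lambda(c) \in \f_q$ with $\sum_{c} \lambda(c)\, c^{p^k} = 1$ and $\sum_{c} \lambda(c)\, c^j = 0$ for every $j \in \{0, 1, \ldots, q-1\} \setminus \{p^k\}$. Assuming $p^k \in J$, summing against $\lambda$ gives
\[
\sum_{c \in \f_q} \lambda(c)\, A_c(X^\ii) = \binom{i_1}{p^k}\, X_1^{i_1-p^k} X_2^{i_2+p^k} X_3^{i_3} \cdots X_n^{i_n} \in M,
\]
and by Lucas's theorem $\binom{i_1}{p^k} \equiv c_k \not\equiv 0 \pmod p$, so this scalar is invertible in $\f_q$ and dividing by it produces the target monomial.

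The remaining case $p^k \notin J$, i.e.\ $i_2 + p^k \ge q$, is vacuous: one has $q \le i_2 + p^k \le 2q - 2$, so reducing $X_2^{i_2+p^k}$ by $X_2^q = X_2$ drops the degree to $r - (q-1) < r$, making the claimed monomial represent $0$ in $H_q(r,n)$ automatically. The main point requiring real care is the reduction bookkeeping — confirming that terms with overflowing $X_2$-exponents genuinely vanish in the quotient $H_q(r,n)$ — and handling the degenerate case separately; once those are in hand, the Vandermonde extraction together with Lucas's theorem finish the job.
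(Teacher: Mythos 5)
Your proof is correct and takes essentially the same route as the paper. The paper applies the single shear $X_1\mapsto X_1+X_2$ and then isolates the $j=p^k$ term via a Vandermonde argument in the variable scaling $X_2\mapsto cX_2$ (its Lemma~\ref{L2}), whereas you parametrize the shear itself by $c\in\f_q$ and run the Vandermonde extraction directly on the shear parameter; both proofs then invoke Lucas's theorem to conclude that $\binom{i_1}{p^k}\not\equiv 0\pmod p$.
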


\begin{proof}
We have
\begin{align*}
M\ni\,&(X_1+X_2)^{i_1}X_2^{i_2}\cdots X_n^{i_n}-X_1^{i_1}X_2^{i_2}\cdots X_n^{i_n}\cr
=\,&\Bigl[\sum_{j=1}^{i_1}\binom{i_1}jX_1^{i_1-j}X_2^j\Bigr]X_2^{i_2}\cdots X_n^{i_n}\cr
=\,&\sum_{j=1}^{i_1}\binom{i_1}jX_1^{i_1-j}X_2^{j+i_2}\cdots X_n^{i_n}.
\end{align*}
By Lemma~\ref{L2}, $\binom{i_1}jX_1^{i_1-j}X_2^{j+i_2}\cdots X_n^{i_n}\in M$ for all $1\le j\le i_1$. Choosing $j=p^k$ gives $X_1^{i_1-p^k}X_2^{i_2+p^k}\cdots X_n^{i_n}\in M$. (Note that $\binom{i_1}{p^k}\ne 0$.)
\end{proof}

\begin{lem}\label{L8}
If $f\in M$, then every monomial in $f$ belongs to $M$.
\end{lem}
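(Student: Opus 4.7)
The plan is to proceed in two stages. First I will use the diagonal torus $D=(\f_q^*)^n\subset\text{GL}(n,\f_q)$ to decompose $f$ into $D$-isotypic pieces, reducing to the case where all monomials of $f$ share a single $D$-character. Second, within such a character class, I will construct, from elementary shifts in $\text{GL}(n,\f_q)$ composed with the $D$-projector, an idempotent in $\f_q[\text{GL}(n,\f_q)]$ that projects onto the span of each individual monomial; applying it to $f$ will then extract each monomial as an element of $M$.

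For the first stage, each monomial $X^\ii$ is a $D$-eigenvector with character $\chi_\ii:d\mapsto d^\ii$, and $\chi_\ii=\chi_\jj$ iff $i_k\equiv j_k\pmod{q-1}$ for all $k$. Since $\gcd(|D|,p)=1$, the character projectors $e_\chi=(q-1)^{-n}\sum_{d\in D}\chi(d)^{-1}d$ lie in $\f_q[D]\subset\f_q[\text{GL}(n,\f_q)]$, preserve $M$, and decompose $f$ as $f=\sum_\chi e_\chi(f)$. So I may assume all monomials of $f$ carry a single character $\chi$. Let $S=\{k:\chi_k=0\}$ and $t=(r-\sum_{k\notin S}\chi_k)/(q-1)$; the monomials of $H_q(r,n)$ of character $\chi$ then correspond bijectively to subsets $T\in\binom{S}{t}$ via $e_T:=X^{\ii_T}$ with exponent $q-1$ on $T$, $0$ on $S\setminus T$, and $\chi_k$ at coordinates $k\notin S$. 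If $|S|\le 1$ or $t\in\{0,|S|\}$ there is at most one such monomial and the claim is immediate, so assume $0<t<|S|$ and write $f=\sum_T c_Te_T$.

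For the second stage, for $k\ne l$ in $S$ let $\sigma_{kl}\in\text{GL}(n,\f_q)$ be the shift $X_k\mapsto X_k+X_l$. A case analysis (reducing modulo $X_i^q-X_i$ and then modulo $R_q(r-1,n)$) shows $\sigma_{kl}(e_T)=e_T$ unless $k\in T$ and $l\notin T$; in that remaining case the expansion of $(X_k+X_l)^{q-1}$ gives $\sigma_{kl}(e_T)=\sum_{j=0}^{q-1}\binom{q-1}{j}X_k^{q-1-j}X_l^jX^{\ii''}$, where $X^{\ii''}$ is what remains of $e_T$ after removing its $X_k^{q-1}$ factor. Composing with $e_\chi$ retains only the $j=0$ and $j=q-1$ terms (both with coefficient $1$), so $T_{kl}:=e_\chi\sigma_{kl}$ satisfies $T_{kl}(e_T)=e_T+e_{T'}$ with $T'=(T\setminus\{k\})\cup\{l\}$ in the nontrivial case, and $T_{kl}(e_T)=e_T$ otherwise. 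Setting $U_{kl}:=T_{kl}-I$, the composition $P_{kl}:=U_{lk}U_{kl}$ sends $e_T$ to $e_T$ precisely when $k\in T,\,l\notin T$ (the intermediate $e_{T'}$ has $l\in T',\,k\notin T'$, so $U_{lk}(e_{T'})=e_T$) and to $0$ otherwise; hence $P_{kl}\in\f_q[\text{GL}(n,\f_q)]$ is a projection onto $\text{span}\{e_T:k\in T,\,l\notin T\}$ that preserves $M$.

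For each $T_0\in\binom{S}{t}$ the product $P_{T_0}:=\prod_{k\in T_0,\,l\in S\setminus T_0}P_{kl}$ is a commuting composition of such projections and satisfies $P_{T_0}(e_T)=e_T$ iff $T\supseteq T_0$ and $T\cap(S\setminus T_0)=\emptyset$, forcing $T=T_0$; thus $P_{T_0}\in\f_q[\text{GL}(n,\f_q)]$ projects $H_q(r,n)$ onto $\f_q e_{T_0}$. Applying $P_{T_0}$ to $f$ yields $c_{T_0}e_{T_0}\in M$, so $e_{T_0}\in M$ whenever $c_{T_0}\ne 0$, giving the lemma. The principal technical point I expect is the shift calculation in the sub-case $k,l\in T$: the expansion produces terms $X_l^{j+q-1}$ for $j\ge 1$ that reduce via $X_l^q=X_l$ to $X_l^j$, dropping the total degree by $q-1$ and so vanishing in $H_q(r,n)$—this degree-drop is what makes $\sigma_{kl}(e_T)=e_T$ hold cleanly in that case and underwrites the whole projection identity.
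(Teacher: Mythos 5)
Your proof is correct, and it takes a genuinely different route from the paper's. The paper proves this lemma by induction on $n$: it writes $f=\sum_i X_n^i a_i(X_1,\dots,X_{n-1})$, uses a primitive-element substitution together with a Vandermonde argument (its Lemma~3.1) to separate out $X_n^i a_i$ for $1\le i\le q-2$, and then needs a separate, somewhat delicate step to decouple the remaining pair $X_n^{q-1}a_{q-1}+a_0$ before appealing to the inductive hypothesis in $n-1$ variables. Your argument replaces this entirely: the torus projectors $e_\chi\in\f_q[D]$ perform the paper's Lemma~3.1 in all $n$ coordinates simultaneously, reducing to a single $D$-isotypic piece spanned by the monomials $e_T$, $T\in\binom{S}{t}$; and the shift idempotents $P_{kl}=U_{lk}U_{kl}$ (with $U_{kl}=e_\chi\sigma_{kl}-I$), which you correctly verify are simultaneously diagonalized in the basis $\{e_T\}$ with eigenvalues in $\{0,1\}$, give a genuine group-algebra element $P_{T_0}$ projecting onto each $\f_q e_{T_0}$. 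The degree-drop computation in the $k,l\in T$ case is exactly right and is indeed the crux. What your approach buys is a clean, non-inductive argument that makes the $X^0/X^{q-1}$ coupling disappear as a special case of the idempotent identity $P_{kl}(e_T)\in\{0,e_T\}$; what the paper's approach buys is that it stays with completely elementary substitutions and avoids any explicit appeal to the semisimplicity of $\f_q[D]$ (character orthogonality over $\f_q$), though that ingredient is standard since $\gcd(q-1,p)=1$ and $\f_q^*$ supplies all the needed roots of unity.
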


\begin{proof}
Use induction on $n$.

First we claim that if $X_n^ia(X_1,\dots,X_{n-1})\in M$, where $a\in H_q(r-i,n-1)$, then all monomials in $X_n^ia$ are in $M$. Let $M_1=\{b\in H_q(r-i,n-1):X_n^ib\in M\}$. Then $M_1$ is a $\text{GL}(n-1,\f_q)$-module and $a\in M_1$. By the induction hypothesis, all monomials in $a$ are in $M_1$. Hence all monomials in $X_n^ia$ are in $M$.

Let
\[
f=X_n^{q-1}a_{q-1}(X_1,\dots,X_{n-1})+X_n^{q-2}a_{q-2}(X_1,\dots,X_{n-1})+\cdots+
a_{0}(X_1,\dots,X_{n-1}).
\]
By the above claim, it suffices to show that $X_n^ia_i\in M$ for all $0\le i\le q-1$. Let $\gamma$ be a primitive element of $\f_q$. Then
\[
f(X_1,\dots,X_{n-1},\gamma X_n)-f(X_1,\dots,X_{n-1}, X_n)=\sum_{i=1}^{q-2}(\gamma^i-1)X_n^ia_i\in M.
\]
By Lemma~\ref{L2}, $X_n^ia_i\in M$ for all $1\le i\le q-2$. Thus we also have $X_n^{q-1}a_{q-1}+a_0\in M$. Let
\[
M_2=\{b\in H_q(r,n-1):X_n^{q-1}a+b\in M\ \text{for some}\ a\in H_q(r-(q-1),n-1)\}.
\]
Then $M_2$ is a $\text{GL}(n-1,\f_q)$-module and $a_0\in M_2$. By the induction hypothesis, all monomials of $a_0$ are in $M_2$, that is, for any monomial $X_1^{i_1}\cdots X_{n-1}^{i_{n-1}}$ of $a_0$, there exists $a(X_1,\dots,X_{n-1})\in H_q(r-(q-1),n-1)$ such that $X_n^{q-1}a+X_1^{i_1}\cdots X_{n-1}^{i_{n-1}}\in M$. It suffices to show that $X_1^{i_1}\cdots X_{n-1}^{i_{n-1}}\in M$. (Then $a_0\in M$ and hence $X_n^{q-1}a_{q-1}\in M$.) Without loss of generality, assume $i_1>0$. Write
\[
i_1=c_0p^0+\cdots+c_{m-1}p^{m-1}
\]
in base $p$ expansion and assume that $c_j>0$ for some $j$. Then
\begin{align*}
M\ni\,&\Bigl(X_n^{q-1}a_{q-1}((X_1+X_n),X_2,\dots,X_{n-1})+(X_1+X_n)^{i_1}X_2^{i_2}\cdots X_{n-1}^{i_{n-1}} \Bigr)\cr\
&-\Bigl(X_n^{q-1}a_{q-1}(X_1,X_2,\dots,X_{n-1})+X_1^{i_1}X_2^{i_2}\cdots X_{n-1}^{i_{n-1}} \Bigr)\cr
=\,& (X_1+X_n)^{i_1}X_2^{i_2}\cdots X_{n-1}^{i_{n-1}}-X_1^{i_1}X_2^{i_2}\cdots X_{n-1}^{i_{n-1}} \cr
=\,&\sum_{k=1}^{i_1}\binom{i_1}k X_n^kX_1^{i_1-k}X_2^{i_2}\cdots X_{n-1}^{i_{n-1}}.
\end{align*}
(To see the first equality in the above, note that $X_n^q=X_n$ in $\mathcal F(\f_q^n,\f_q)$, and hence any monomial $X_1^{j_1}\cdots X_n^{j_n}$ with $j_1+\cdots+j_n=r$ and $j_n\ge q$ is 0 in $H_q(r,n)$.)
By Lemma~\ref{L2}, $\binom{i_1}k X_n^kX_1^{i_1-k}X_2^{i_2}\cdots X_{n-1}^{i_{n-1}}\in M$ for all $1\le k\le i_1$. (Note: $X_1$ here plays the role of $X_n$ in Lemma~\ref{L2}.) Since $c_j>0$, by Lucas's theorem, $\binom{i_1}{p^j}\ne0$, whence $X_n^{p^j}X_1^{i_1-p^j}X_2^{i_2}\cdots X_{n-1}^{i_{n-1}}\in M$. Then by Lemma~\ref{L4}, $X_1^{i_1}X_2^{i_2}\cdots X_{n-1}^{i_{n-1}}\in M$. This completes the proof.
\end{proof}

\begin{cor}\label{C3.4}
$M$ is generated  over $\f_q$ by a set of monomials.
\end{cor}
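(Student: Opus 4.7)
The corollary is essentially an immediate packaging of Lemma~\ref{L8}, so the plan is very short. The strategy is to exhibit an explicit spanning set of $M$ consisting of monomials, using Lemma~\ref{L8} to put each monomial into $M$ individually.

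First, I would define
\[
S=\bigl\{X^{\ii}:\ii\in\Omega_{q,n,r},\ X^{\ii}\text{ appears with nonzero coefficient in some }f\in M\bigr\}.
\]
By Lemma~\ref{L8}, every monomial appearing in an element of $M$ is itself an element of $M$, so $S\subseteq M$.

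Next, I would show that $S$ spans $M$ over $\f_q$. Any $f\in M$ can be written uniquely in the basis $\frak B_r$ of $H_q(r,n)$ as an $\f_q$-linear combination of monomials $X^{\ii}$; each monomial occurring in this expansion (with nonzero coefficient) belongs to $S$ by construction. Thus $f$ is an $\f_q$-linear combination of elements of $S$, and hence $M=\mathrm{span}_{\f_q}(S)$.

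There is no genuine obstacle here — all the real work has been carried out in Lemmas~\ref{L2}, \ref{L4}, and \ref{L8}. The corollary simply records the conclusion that every $\mathrm{GL}(n,\f_q)$-submodule of $H_q(r,n)$ is a monomial submodule, which is the crucial structural fact that will allow the classification of submodules to be reduced to a combinatorial problem about subsets of $\Omega_{q,n,r}$ closed under the operations exposed by Lemmas~\ref{L2} and \ref{L4}.
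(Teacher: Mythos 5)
Your proof is correct and follows exactly the route the paper intends: the paper's own proof consists of the single line that Corollary~\ref{C3.4} is a restatement of Lemma~\ref{L8}, and your argument merely spells out the (immediate) spanning-set verification.
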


\begin{proof}
This is a restatement of Lemma~\ref{L8}.
\end{proof}

For $\ii=(i_1,\dots,i_n)\in \Omega_{q,n,r}$, write
\[
i_j=\sum_{k=0}^{m-1} i_{jk}p^k,\quad 0\le i_{jk}\le p-1,
\]
and let
\[
D(\ii)=
\left[\begin{matrix}i_{10}&\cdots&i_{1,m-1}\cr\vdots&&\vdots\cr i_{n0}&\cdots&i_{n,m-1}\end{matrix}\right].
\]
Define $T(\ii)\in \Bbb N^m$ by
\begin{align}\label{Ti}
T(\ii)\,&=\Bigl(\sum_{k=0}^0\sum_{j=1}^n i_{jk}p^k,\, \sum_{k=0}^1\sum_{j=1}^n i_{jk}p^k,\, \dots,\, \sum_{k=0}^{m-1}\sum_{j=1}^n i_{jk}p^k\Bigr)\\
&=\left[\begin{matrix}1&\cdots&1\end{matrix}\right]
D(\ii)
\left[\begin{matrix}p^0&p^0&\cdots&p^0\cr &p^1&\cdots&p^1\cr &&\ddots&\vdots\cr &&&p^{m-1}\end{matrix}\right].\nonumber
\end{align}
In the above, $T(\ii)$ is determined by $\left[\begin{matrix}1&\cdots&1\end{matrix}\right]
D(\ii)$, the vector of column sums of $D(\ii)$, and vice versa. 
Equation~\eqref{Ti} defines a map $T:\Omega_{q,n,r}\to\Bbb N^m$. 
Let $T(\ii)=(t_0,\dots,t_{m-1})$. Then 
\begin{equation}\label{tk+=r}
t_k+p^{k+1}\Bigl(\Bigl\lfloor\frac{i_1}{p^{k+1}}\Bigr\rfloor+\cdots+\Bigl\lfloor\frac{i_n}{p^{k+1}}\Bigr\rfloor\Bigr)=r,\quad 0\le k\le m-1,
\end{equation}
and hence $t_{m-1}=r$ and 
\[
t_k\equiv r\pmod{p^{k+1}},\quad 0\le k\le m-2.
\]
The image set $T(\Omega_{q,n,r})$ consists of $m$-tuples $(t_0,\dots,t_{m-1})$ satisfying the following conditions:
\begin{equation}\label{TOmega}
\begin{cases}
t_{m-1}=r,\cr
t_k\equiv r\pmod{p^{k+1}},\quad 0\le k\le m-2,\cr
0\le t_0\le n(p-1),\vspace{0.2em}\cr
0\le\displaystyle\frac 1{p^k}(t_k-t_{k-1})\le n(p-1),\quad 1\le k\le m-1.
\end{cases}
\end{equation}
For $\bt=(t_0,\dots,t_{m-1}), \bt'=(t_0',\dots,t_{m-1}')\in T(\Omega_{q,n,r})$, define $\bt\le \bt'$ if $t_j\le t_j'$ for all $0\le j\le m-1$. Then $(T(\Omega_{q,n,r}),\le)$ is a partially ordered set.

\begin{lem}\label{L3.5}
Let $\ii\in\Omega_{q,n,r}$ and $A\in\text{\rm GL}(n,\f_q)$. Then in $H_q(r,n)$,
\begin{equation}\label{AXi}
A(X^\ii)=\sum_{\substack{\jj\in \Omega_{q,n,r}\cr T(\jj)\le T(\ii)}}\alpha_{\jj}X^{\jj},
\end{equation}
where $\alpha_\jj\in\f_q$.
\end{lem}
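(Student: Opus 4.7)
The plan is to read off the coefficient of $X^{\jj}$ in $A(X^{\ii})$ from formula (3.4) already derived above, namely
\[
\sigma_{\ii,\jj}(A)=\sum_{(i_{st})\in M(\ii,\jj)}\binom{i_1}{i_{11},\ldots,i_{1n}}\cdots\binom{i_n}{i_{n1},\ldots,i_{nn}}\prod_{s,t}a_{ts}^{i_{st}},
\]
and to show that non-vanishing of this coefficient forces $T(\jj)\le T(\ii)$. Since the entries $a_{ts}$ are free, it suffices to prove: whenever $(i_{st})\in M(\ii,\jj)$ contributes a multinomial product that is nonzero modulo $p$, we have $T(\jj)\le T(\ii)$.

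The key tool will be Lucas's theorem for multinomial coefficients: $\binom{i_s}{i_{s1},\ldots,i_{sn}}$ is nonzero modulo $p$ precisely when the base-$p$ addition $i_{s1}+\cdots+i_{sn}=i_s$ is carry-free, equivalently
\[
i_s\bmod p^{k+1}=\sum_{t=1}^n(i_{st}\bmod p^{k+1})\qquad\text{for all }\ 0\le k\le m-1.
\]
Summing over $s$ gives $\sum_s(i_s\bmod p^{k+1})=\sum_{s,t}(i_{st}\bmod p^{k+1})$. On the other hand, the column-sum relation $j_t=\sum_s i_{st}$ from $M(\ii,\jj)$ may involve carries, so I only have the one-sided bound $j_t\bmod p^{k+1}\le\sum_s(i_{st}\bmod p^{k+1})$; summing over $t$ yields
\[
\sum_{t=1}^n(j_t\bmod p^{k+1})\le\sum_{s=1}^n(i_s\bmod p^{k+1}),\qquad 0\le k\le m-1.
\]

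To finish I invoke identity \eqref{tk+=r}, which rewrites as $t_k(\ii)=r-p^{k+1}\sum_s\lfloor i_s/p^{k+1}\rfloor$ (and similarly for $T(\jj)$). Using $\lfloor i_s/p^{k+1}\rfloor=(i_s-i_s\bmod p^{k+1})/p^{k+1}$, the digit-sum inequality above translates directly into $t_k(\jj)\le t_k(\ii)$ for $0\le k\le m-2$, while $t_{m-1}(\ii)=t_{m-1}(\jj)=r$ holds by definition. Hence $T(\jj)\le T(\ii)$, as required. The main obstacle is picking the right digit-truncation so that Lucas's theorem delivers exact equality on the row side (where one has control) while the potentially-carry-laden column side yields only a one-way inequality in the desired direction; once these are aligned, the conclusion reduces to straightforward arithmetic with \eqref{tk+=r}.
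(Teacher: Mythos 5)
Your proof is correct, and it takes a genuinely different route from the paper's. The paper reduces to generators of $\text{GL}(n,\f_q)$ (diagonal, permutation, and the single shear $(X_1,\dots,X_n)\mapsto(X_1+X_2,X_2,\dots,X_n)$), noting that the first two types preserve $T$, and handles the shear by applying Lucas's theorem to the binomial coefficient $\binom{i_1}{l}$ and then estimating $\sum_s\lfloor j_s/p^{k+1}\rfloor\ge\sum_s\lfloor i_s/p^{k+1}\rfloor$ directly. You instead work uniformly for all $A$ at once from the explicit multinomial formula \eqref{sigma} for the matrix entry $\sigma_{\ii,\jj}(A)$: Lucas for multinomial coefficients forces a carry-free base-$p$ row decomposition $i_s=\sum_t i_{st}$, which gives exact digit-sum equality $\sum_s(i_s\bmod p^{k+1})=\sum_{s,t}(i_{st}\bmod p^{k+1})$, while the column sums $j_t=\sum_s i_{st}$ (possibly carrying) give the one-sided bound $\sum_t(j_t\bmod p^{k+1})\le\sum_{s,t}(i_{st}\bmod p^{k+1})$; then the observation $t_k(\ii)=\sum_s(i_s\bmod p^{k+1})$ (equivalent to \eqref{tk+=r}) finishes it. Your argument is more direct in that it does not pass through generators, at the cost of invoking the heavier machinery of \eqref{sigma}; the paper's argument is more self-contained at that point in Section 3. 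Both ultimately rest on Lucas's theorem and the same digit-truncation reformulation of $T$. One minor slip: the formula you cite is equation \eqref{sigma} from Section 2, not ``(3.4).''
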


\begin{proof}
Let $\ii=(i_1,\dots,i_n)$. If $(X_1,\dots,X_n)A=(\lambda X_1,X_2,\dots,X_n)$, where $\lambda\in\f_q^*$, then $A(X^\ii)=\lambda^{i_1} X^\ii$. If $(X_1,\dots,X_n)A=(X_2,X_1,X_3,\dots,X_n)$, then $A(X^\ii)=X^\jj$, where $\jj=(i_2,i_1,i_3,\dots,i_n)\in\Omega_{q,n,r}$ and $T(\jj)=T(\ii)$. It remains to consider the case $(X_1,\dots,X_n)A=(X_1+X_2,X_2,\dots,X_n)$. We have
\begin{align*}
A(X^\ii)\,&=(X_1+X_2)^{i_1}X_2^{i_2}\cdots X_n^{i_n}=\sum_l\binom{i_1}lX_1^lX_2^{i_1-l}X_2^{i_2}\cdots X_n^{i_n}\cr
&=\sum_{i_1+i_2-(q-1)\le l\le i_1}\binom{i_1}lX_1^lX_2^{i_1+i_2-l}X_3^{i_3}\cdots X_n^{i_n}.
\end{align*}
Fix $l$ such that $i_1+i_2-(q-1)\le l\le i_1$ and $\binom{i_1}l\ne 0$ and let 
\[
\jj=(l,i_1+i_2-l,i_3,\dots,i_n)\in\Omega_{q,n,r}.
\]
We want to show that $T(\jj)\le T(\ii)$. Let $T(\ii)=(t_0,\dots,t_{m-1})$ and $T(\jj)=(t_0',\dots,t_{m-1}')$. Let $0\le k\le m-1$. Write $i_1=ap^{k+1}+b$ and $l=up^{k+1}+v$, where $a,b,u,v\in\Bbb Z$ and $0\le b,v<p^{k+1}$. Since $\binom{i_1}l\ne 0$, by Lucas's theorem, $v\le b$. Thus
\begin{align*}
&\left\lfloor\frac{l}{p^{k+1}}\right\rfloor+\left\lfloor\frac{i_1+i_2-l}{p^{k+1}}\right\rfloor+\left\lfloor\frac{i_3}{p^{k+1}}\right\rfloor+\cdots+\left\lfloor\frac{i_n}{p^{k+1}}\right\rfloor\cr
&=\left\lfloor\frac{up^{k+1}+v}{p^{k+1}}\right\rfloor+\left\lfloor\frac{(a-u)p^{k+1}+b-v+i_2}{p^{k+1}}\right\rfloor+\left\lfloor\frac{i_3}{p^{k+1}}\right\rfloor+\cdots+\left\lfloor\frac{i_n}{p^{k+1}}\right\rfloor\cr
&\ge u+a-u+\left\lfloor\frac{i_2}{p^{k+1}}\right\rfloor+\left\lfloor\frac{i_3}{p^{k+1}}\right\rfloor+\cdots+\left\lfloor\frac{i_n}{p^{k+1}}\right\rfloor\cr
&=\left\lfloor\frac{i_1}{p^{k+1}}\right\rfloor+\cdots+\left\lfloor\frac{i_n}{p^{k+1}}\right\rfloor.
\end{align*}
This means, in light of \eqref{tk+=r}, that $t_k'\le t_k$. Therefore, $T(\jj)\le T(\ii)$.
\end{proof}

For $\ii\in\Omega_{q,n,r}$, we describe an operation on the matrix $D(\ii)$ called {\em digit transfer}\,: Take two entries $i_{j_1,k}$ and $i_{j_2,k}$ in the same column with $i_{j_1,k}>0$ and $i_{j_2,k}<p-1$. Replace $i_{j_1,k}$ with $i_{j_1,k}-1$ and $i_{j_2,k}$ with $i_{j_2,k}+1$.

\begin{lem}\label{digit-trans}
Let $M$ be a GL-submodule of $H_q(r,n)$ such that $X^\ii\in M$, where $\ii\in\Omega_{q,n,r}$. If $\ii'\in\Omega_{q,n,r}$ is such that $D(\ii')$ can be obtained from $D(\ii)$ through a digit transfer, then $X^{\ii'}\in M$.
\end{lem}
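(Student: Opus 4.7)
The plan is to leverage the permutation-symmetry of $M$ in order to reduce the statement to a single application of Lemma~\ref{L4}. Every permutation matrix lies in $\text{GL}(n,\f_q)$, so $M$ is closed under arbitrary permutations of the variables $X_1,\dots,X_n$. Conjugating by the transposition that sends $(X_{j_1},X_{j_2})$ to $(X_1,X_2)$, I may assume without loss of generality that $j_1=1$ and $j_2=2$. Writing $i_1 = \sum_{l=0}^{m-1} i_{1,l}\,p^l$ in base $p$, the hypothesis that the $(j_1,k)$ entry of $D(\ii)$ is positive becomes the hypothesis $c_k := i_{1,k} > 0$ of Lemma~\ref{L4}, which then yields
\[
X_1^{i_1-p^k}X_2^{i_2+p^k}X_3^{i_3}\cdots X_n^{i_n}\in M.
\]

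The remaining task is to verify that the exponent tuple $(i_1-p^k,i_2+p^k,i_3,\dots,i_n)$ lies in $\Omega_{q,n,r}$ and coincides with $\ii'$ as prescribed by the digit transfer; this is where the condition $i_{j_2,k}<p-1$ enters. Since $i_{2,k}<p-1$, adding $p^k$ to $i_2$ produces no carry, so the base-$p$ digits of $i_2+p^k$ are $(i_{2,0},\dots,i_{2,k-1},i_{2,k}+1,i_{2,k+1},\dots,i_{2,m-1})$, and in particular $i_2+p^k\le q-1$. Consequently the indicated monomial is genuinely $X^{\ii'}$ (not the reduction of a monomial whose degree has dropped below $r$, which would be $0$ in $H_q(r,n)$), and the matrix $D(\ii')$ is indeed obtained from $D(\ii)$ by the stated digit transfer in column $k$. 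Conjugating back by the inverse transposition completes the argument.

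The proof is really a bookkeeping exercise: permutation-symmetry handles the choice of indices $j_1,j_2$, Lemma~\ref{L4} handles the transfer from $X_1$ to $X_2$, and the carry-free hypothesis $i_{j_2,k}<p-1$ is built into the definition of digit transfer precisely to guarantee that the resulting monomial still sits in $\Omega_{q,n,r}$. The only potential obstacle is accounting for the reduction $X_j^q=X_j$ in $\mathcal F(\f_q^n,\f_q)$, but the carry-free condition sidesteps it entirely, so the argument is expected to be short.
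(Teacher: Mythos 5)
Your proof is correct and follows the same route the paper takes: the paper's proof of this lemma is the single line ``This follows from Lemma~\ref{L4},'' and you have simply made explicit the two bookkeeping steps that line implicitly relies on — conjugating by a permutation to reduce to $j_1=1$, $j_2=2$, and using the hypothesis $i_{j_2,k}<p-1$ to ensure the transfer is carry-free so that the resulting exponent tuple genuinely lies in $\Omega_{q,n,r}$ rather than reducing to $0$ in $H_q(r,n)$.
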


\begin{proof} 
This follows from Lemma~\ref{L4}.
\end{proof} 

\begin{lem}\label{L3.6}
Let $M$ be a GL-submodule of $H_q(r,n)$ such that $X^\ii\in M$, where $\ii\in\Omega_{q,n,r}$. Then $X^{\ii'}\in M$ for all $\ii'\in\Omega_{q,n,r}$ with $T(\ii')\le T(\ii)$.
\end{lem}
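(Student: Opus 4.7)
The plan is to induct on the nonnegative integer $d(\ii,\ii'):=\sum_{k=0}^{m-2}(t_k(\ii)-t_k(\ii'))/p^{k+1}$, where $T(\ii)=(t_0(\ii),\dots,t_{m-1}(\ii))$. Two operations drive the induction and keep us inside $M$: digit transfers (Lemma~\ref{digit-trans}), which preserve the column sums of $D(\ii)$ and hence the vector $T(\ii)$; and Lemma~\ref{L4} applied so that the addition in the receiving row triggers exactly one carry, which strictly reduces $T$ by $p^{k+1}\mathbf{e}_k$ at a chosen index $k$ while leaving the other coordinates of $T$ fixed.

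The base case $d(\ii,\ii')=0$ says $D(\ii)$ and $D(\ii')$ have the same column sums; a straightforward column-by-column exchange argument (at each step, transfer a unit from a row where the source exceeds the target to a row where it is smaller) connects them by a sequence of digit transfers, so Lemma~\ref{digit-trans} delivers $X^{\ii'}\in M$. For the inductive step $d(\ii,\ii')>0$, I pick any $k\le m-2$ with $t_k(\ii)>t_k(\ii')$. Checking \eqref{TOmega} against both $T(\ii)$ and the candidate $T(\ii)-p^{k+1}\mathbf{e}_k$ forces the column sums of $D(\ii)$ to satisfy $s_k\ge p$ and $s_{k+1}\le n(p-1)-1$. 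Using the base case I replace $\ii$ by some $\tilde\ii\in\Omega_{q,n,r}$ with $T(\tilde\ii)=T(\ii)$ whose diagram satisfies
\[
\tilde i_{1,k}\ge 1,\qquad \tilde i_{2,k}=p-1,\qquad \tilde i_{2,k+1}\le p-2,
\]
the remaining rows absorbing the leftover column sums; the two inequalities above are precisely what make this configuration feasible. Applying Lemma~\ref{L4} to $\tilde\ii$ with $j=p^k$ yields $X_1^{\tilde i_1-p^k}X_2^{\tilde i_2+p^k}X_3^{\tilde i_3}\cdots X_n^{\tilde i_n}\in M$, and the conditions on $\tilde i_{2,k}$ and $\tilde i_{2,k+1}$ force the addition $\tilde i_2+p^k$ to produce exactly one carry into column $k+1$. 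Combined with the $p^k$ removed from $X_1$, the exponent vector $\ii''$ of the new monomial satisfies $T(\ii'')=T(\ii)-p^{k+1}\mathbf{e}_k$ and still $T(\ii')\le T(\ii'')$; hence $d(\ii'',\ii')<d(\ii,\ii')$, and the inductive hypothesis applied to the pair $(\ii'',\ii')$ gives $X^{\ii'}\in M$.

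I expect the main obstacle to be the feasibility check in the inductive step: showing that the required digit layout of $\tilde\ii$ coexists with the given column sums amounts to extracting the two sharp inequalities $s_k\ge p$ and $s_{k+1}\le n(p-1)-1$ directly from \eqref{TOmega}, while the remaining rows and columns are handled by loose counting bounded by $n(p-1)$. The degenerate case $n=1$ requires no separate argument since $\Omega_{q,1,r}$ has at most one element.
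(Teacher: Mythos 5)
Your overall template (handle $T(\ii')=T(\ii)$ by digit transfers, then induct on the gap $T(\ii)-T(\ii')$ using a carry-producing transfer from Lemma~\ref{L4}) matches the paper's strategy, but the inductive step contains a genuine gap. You assert that the feasibility inequalities $s_k\ge p$ and $s_{k+1}\le n(p-1)-1$ are ``forced'' by checking \eqref{TOmega} against $T(\ii)$ and the candidate $T(\ii)-p^{k+1}\mathbf e_k$. That reasoning is circular: those inequalities are precisely the conditions that the candidate tuple \emph{be} in $T(\Omega_{q,n,r})$, and there is no a priori guarantee that it is --- the set $T(\Omega_{q,n,r})$ is not closed under componentwise interpolation between two of its elements. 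Concretely, take $p=2$, $m=3$, $n=2$, $r=6$, $\ii=(3,3)$ so that $D(\ii)=\left[\begin{smallmatrix}1&1&0\\1&1&0\end{smallmatrix}\right]$, column sums $(2,2,0)$, $T(\ii)=(2,6,6)$; and $\ii'=(0,6)$ with $T(\ii')=(0,2,6)\lneqq T(\ii)$. The only eligible $k$ with $t_k>t_k'$ is $k=0$ (since $m-2=1$ and also $t_1>t_1'$, but for $k=1$ one has $s_1=2<p$ anyway), and the candidate $T(\ii)-p\,\mathbf e_0=(0,6,6)$ has $\tilde s_1=3>n(p-1)=2$, so it is not in $T(\Omega_{8,2,6})$: a single carry from column $0$ to column $1$ is impossible because column $1$ of $D(\ii)$ is already full. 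Your claim that ``any $k$ with $t_k>t_k'$'' works is also false: in $T(\Omega_{8,3,5})$ with $T(\ii)=(3,5,5)$, $T(\ii')=(1,1,5)$, picking $k=1$ gives $s_1=1<p$.

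The paper avoids both problems. First it takes $l$ to be the \emph{smallest} index with $t_l>t_l'$; only then is the identity $s_l-s_l'=p^{-l}(t_l-t_l')$ valid (it uses $t_{l-1}=t_{l-1}'$), and since $t_l\equiv t_l'\pmod{p^{l+1}}$ this forces $s_l\ge p$. Second, it does not attempt a single-column carry: it locates the first column $u>l$ of $D(\ii)$ that is not all $(p-1)$ (such a $u$ exists because otherwise $t_{m-1}>t_{m-1}'$, contradicting both equalling $r$), rearranges by digit transfers so that one row has $p-1$ in columns $l,\dots,u-1$ and a non-full entry in column $u$, and then performs the $p^l$-transfer of Lemma~\ref{L4}, which carries through all intermediate full columns and lands at column $u$. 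This decreases $t_k$ by $p^{k+1}$ for \emph{every} $l\le k<u$ simultaneously, not just one coordinate. Your proposal would need to incorporate both the ``smallest $l$'' choice and the long-carry mechanism to be correct.
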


\begin{proof}
Let $\ii=(i_1,\dots,i_n)$ and $\ii'=(i_1',\dots,i_n')$ and let
\[
i_j=\sum_{k=0}^{m-1} i_{jk}p^k\quad \text{and}\quad i_j'=\sum_{k=0}^{m-1} i_{jk}'p^k
\]
be the base $p$ expansions of $i_j$ and $i_j'$, respectively, that is, $D(\ii)=(i_{jk})$ and $D(\ii')=(i_{jk}')$. 

\medskip

$1^\circ$ First assume that $T(\ii')= T(\ii)$. Since $D(\ii')$ and $D(\ii)$ have the same column sums, $D(\ii')$ can be obtained from $D(\ii)$ through a finite number of digit transfers. Therefore, by Lemma~\ref{digit-trans}, $X^{\ii'}\in M$.

\medskip

$2^\circ$
Now assume that $T(\ii')\lneqq T(\ii)$. Using induction on the partial order $\le$, it suffices to show that there exists $\ii''\in \Omega_{q,n,r}$ such that $X^{\ii''}\in M$ and $T(\ii')\le T(\ii'')\lneqq T(\ii)$.
Write $T(\ii)=(t_0,\dots,t_{m-1})$ and $T(\ii')=(t_0',\dots,t_{m-1}')$ and assume that $t_k=t_k'$ for $0\le k< l$ but $t_l>t_l'$. 

Let 
\[
(s_0,\dots,s_{m-1})=\left[\begin{matrix}1&\cdots&1\end{matrix}\right]
D(\ii)
\]
and 
\[
(s_0',\dots,s_{m-1}')=\left[\begin{matrix}1&\cdots&1\end{matrix}\right]
D(\ii').
\]
We claim that $s_l\ge p$. Since $s_lp^l=t_l-t_{l-1}$ and $s_l'p^l=t_l'-t_{l-1}'$, we have $s_l-s_l'=p^{-l}(t_l-t_l')$. It follows that $s_l-s_l'>0$ and $s_l-s_l'\equiv 0\pmod p$ since $t_l\equiv r\equiv t_l'\pmod{p^{l+1}}$ (by \eqref{TOmega}). Thus $s_l\ge p$.

We claim that there is some $k$ with $l<k\le m-1$, such that
\begin{equation}\label{ne}
\left[\begin{matrix}i_{1k}\cr\vdots\cr i_{nk}\end{matrix}\right]\ne\left[\begin{matrix}p-1\cr\vdots\cr p-1\end{matrix}\right].
\end{equation}
Otherwise, $s_k\ge s_k'$ for all $l<k\le m-1$. Then
\begin{equation}\label{tm-1}
t_{m-1}=t_l+s_{l+1}p^{l+1}+\cdots+s_{m-1}p^{m-1}>t_l'+s_{l+1}'p^{l+1}+\cdots+s_{m-1}'p^{m-1}=t_{m-1}',
\end{equation}
which is impossible since $t_{m-1}=t_{m-1}'=r$. Let $u$ be the smallest $k$ ($l<k\le m-1$) satisfying \eqref{ne}. Then, through digit transfers, we may write
\[
\left[\begin{matrix}
i_{1l}&\cdots&i_{1u}\cr
\vdots&&\vdots\cr
i_{nl}&\cdots&i_{nu}\end{matrix}\right]=\left[\begin{matrix}
p-1,&p-1&\cdots&p-1&i_{1u}\cr
i_{2l}&p-1&\cdots&p-1&i_{2u}\cr
\vdots&\vdots&&\vdots&\vdots\cr
i_{nl}&p-1&\cdots&p-1&i_{nu}
\end{matrix}\right],
\]
where $i_{2l}>0$ and $i_{1u}<p-1$. Similar to \eqref{tm-1}, we have $t_k>t_k'$ for $l\le k<u$. Let 
\[
\ii''=(i_1+p^l,i_2-p^l,i_3,\dots,i_n)\in\Omega_{q,n,r}.
\]
By Lemma~\ref{L4}, $X^{\ii''}\in M$. Write $D(\ii'')=(i_{jk}'')$. Then $i_{jk}''=i_{jk}$ for $0\le k<l$ and $u<k\le m-1$. For $l\le k\le u$, we have
\[
\left[\begin{matrix}
i_{1l}''&\cdots&i_{1u}''\cr
\vdots&&\vdots\cr
i_{nl}''&\cdots&i_{nu}''\end{matrix}\right]
=\left[\begin{matrix}
0&0&\cdots&0&i_{iu}+1\cr
i_{2l}-1&p-1&\cdots&p-1&i_{2u}\cr
\vdots&\vdots&&\vdots&\vdots\cr
i_{nl}&p-1&\cdots&p-1&i_{nu}
\end{matrix}\right].
\]
Therefore, 
\begin{align*}
&\left[\begin{matrix}1&\cdots&1\end{matrix}\right]D(\ii'')\cr
&=(s_0,\dots,s_{l-1},s_l-p,s_{l+1}-(p-1),\dots,s_{u-1}-(p-1),s_u+1,s_{u+1},\dots,s_{m-1}).
\end{align*}
Hence $T(\ii'')=(t_0'',\dots,t_m'')$, where
\[
t_k''=\begin{cases}
t_k-(p^{l+1}+(p-1)p^{l+1}+\cdots+(p-1)p^k)=t_k-p^{k+1}\quad\text{if}\ l\le k<u,\cr
t_k\quad\text{if}\ 0\le k<l\ \text{or}\ u\le k\le m-1.
\end{cases}
\]
Clearly, $T(\ii'')\lneqq T(\ii)$.
Recall that for $l\le k<u$, $t_k>t_k'$ and $t_k\equiv  t_k'\pmod{p^{k+1}}$, whence $t_k\ge t_k'+p^{k+1}$. Therefore $t_k''\ge t_k'$ for all $k$, i.e., $T(\ii'')\ge T(\ii')$. This completes the proof of the lemma. 
\end{proof}

\begin{defn}
A subset $I\subset T(\Omega_{q,n,r})$ is called an ideal of the partially ordered set $(T(\Omega_{q,n,r}),\le)$ if for $\bt,\bt'\in T(\Omega_{q,n,r})$ with $\bt'\le \bt$, $\bt\in I$ implies $\bt'\in I$.
\end{defn}

For each ideal $I$ of $(T(\Omega_{q,n,r}),\le)$, define
\begin{equation}\label{M(I)}
M(I)=\text{the $\f_q$-linear span of}\ \{X^\ii:\ii\in T^{-1}(I)\}.
\end{equation}
Let $\mathcal I$ be the set of all ideals of $(T(\Omega_{q,n,r}),\le)$ and $\mathcal M$ be the set of all $\text{GL}(n,\f_q)$-submodules of $H_q(r,n)$.
Combining several previous lemmas, we arrive at the following main result.

\begin{thm}\label{T3.8}
The map 
\[
\begin{array}{cccc}
\Phi:&\mathcal I&\longrightarrow&\mathcal M\vspace{0.3em}\cr
&I&\longmapsto& M(I)
\end{array}
\]
is a bijection.
\end{thm}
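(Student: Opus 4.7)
The plan is to verify well-definedness, injectivity, and surjectivity of $\Phi$, using the lemmas already established as building blocks; essentially all the hard work has been front-loaded into Corollary~\ref{C3.4}, Lemma~\ref{L3.5}, and Lemma~\ref{L3.6}.

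First I would check that $\Phi$ is well-defined, that is, $M(I)$ is actually a $\text{GL}(n,\f_q)$-submodule. For any $\ii\in T^{-1}(I)$ and any $A\in\text{GL}(n,\f_q)$, Lemma~\ref{L3.5} expresses $A(X^\ii)$ as an $\f_q$-linear combination of monomials $X^\jj$ with $T(\jj)\le T(\ii)$. Since $T(\ii)\in I$ and $I$ is an ideal of $(T(\Omega_{q,n,r}),\le)$, each such $T(\jj)$ lies in $I$, so each $\jj$ lies in $T^{-1}(I)$. Hence $A(X^\ii)\in M(I)$, and since $M(I)$ is spanned by the $X^\ii$, it is $\text{GL}(n,\f_q)$-stable.

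Injectivity is immediate from linear independence of monomials: the set of monomials contained in $M(I)$ is exactly $\{X^\ii:\ii\in T^{-1}(I)\}$, so $M(I)=M(I')$ forces $T^{-1}(I)=T^{-1}(I')$, and applying $T$ yields $I=I'$.

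For surjectivity, given $M\in\mathcal{M}$, I would define
\[
I=\{T(\ii):\ii\in\Omega_{q,n,r},\ X^\ii\in M\}\subseteq T(\Omega_{q,n,r}).
\]
That $I$ is an ideal follows directly from Lemma~\ref{L3.6}: if $\bt\in I$ is realized by some $\ii$ with $X^\ii\in M$ and $\bt'\le\bt$ with $\bt'=T(\ii')$, then $T(\ii')\le T(\ii)$ gives $X^{\ii'}\in M$, so $\bt'\in I$. To show $M=M(I)$, the inclusion $M\subseteq M(I)$ uses Corollary~\ref{C3.4} (so $M$ is spanned by the monomials it contains, each of which has $T$-image in $I$ by construction), and the reverse inclusion uses Lemma~\ref{L3.6} again: any $\ii\in T^{-1}(I)$ has $T(\ii)=T(\ii_0)$ for some $\ii_0$ with $X^{\ii_0}\in M$, whence $X^\ii\in M$ by the equality case of the lemma.

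There is no serious obstacle left at this point; the main conceptual step that needed care was Lemma~\ref{L3.6} (the downward-closure under $T$), which in turn rests on the digit-transfer mechanism of Lemma~\ref{digit-trans} and the transvection computation. Once that machinery is in place, the bijection $\Phi$ essentially writes itself, and the proof of Theorem~\ref{T3.8} is just an assembly of these pieces.
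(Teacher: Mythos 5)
Your proof is correct and follows essentially the same route as the paper: the ideal you construct from $M$ for surjectivity is exactly the paper's inverse map $\Psi(M)$, and both arguments rest on Corollary~\ref{C3.4} and Lemma~\ref{L3.6}, so the bijection falls out the same way whether organized as (injective, surjective) or as an explicit two-sided inverse. One small point in your favor: you explicitly verify that $M(I)$ is $\text{GL}(n,\f_q)$-stable using Lemma~\ref{L3.5}, whereas the paper leaves this well-definedness of $\Phi$ implicit.
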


\begin{proof}
For each $M\in\mathcal M$, define
\begin{equation}\label{Psi}
\Psi(M)=\{T(\ii):\ii\in\Omega_{q,n,r}, X^\ii\in M\}.
\end{equation}
By Lemma~\ref{L3.6}, $\Psi(M)$ is an ideal of $(T(\Omega_{q,n,r}),\le)$. Hence we have a map $\Psi:\mathcal M\to \mathcal I$. It remains to show that both $\Phi\circ\Psi$ and $\Psi\circ\Phi$ are identity maps.

Let $M\in\mathcal M$. If $\ii\in T^{-1}(\Psi(M))$, then $T(\ii)\in\Psi(M)$. By \eqref{Psi}, $T(\ii)=T(\jj)$ for some $\jj\in\Omega_{q,n,r}$ with $X^\jj\in M$. By Lemma~\ref{L3.6}, $X^\ii\in M$. Since $M(\Psi(M))$ is the $\f_q$-linear span of $\{X^\ii:\ii\in T^{-1}(\Psi(M))\}$, we have $M(\Psi(M))\subset M$. On the other hand, if $X^\ii$ is a monomial in $M$, by \eqref{Psi}, $T(\ii)\in\Psi(M)$, i.e., $\ii\in T^{-1}(\Psi(M))$. Then by \eqref{M(I)}, $X^\ii\in M(\Psi(M))$. Since $M$ is generated over $\f_q$ by a set of monomials (Corollary~\ref{C3.4}), we have $M\subset M(\Psi(M))$. Therefore, $M(\Psi(M))=M$ for all $M\in\mathcal M$, whence $\Phi\circ\Psi$ is the identity map.

Let $I\in\mathcal I$. If $\bt\in\Psi(M(I))$, then $\bt=T(\ii)$ for some $\ii\in\Omega_{q,n,r}$ with $X^\ii\in M(I)$. By \eqref{M(I)}, $\ii\in T^{-1}(I)$. Thus $\bt=T(\ii)\in I$. So $\Psi((M(I))\subset I$. On the other hand, if $\bt\in I$, choose $\ii\in T^{-1}(\bt)$. Then by \eqref{M(I)}, $X^\ii\in M(I)$. By \eqref{Psi}, $\bt=T(\ii)\in\Psi(M(I))$. So $I\subset\Psi(M(I))$. Therefore, $\Psi(M(I))=I$ for all $I\in\mathcal I$, whence $\Psi\circ\Phi$ is the identity map.
\end{proof}

Each ideal $I$ of the partially ordered set $(T(\Omega_{q,n,r}),\le)$ is determined by its {\em boundary} which is the set of maximal elements in $I$. On the other hand, each subset $B$ of pairwise noncomparable elements of $T(\Omega_{q,n,r})$ determines an ideal with boundary $B$. Therefore, ideals of $(T(\Omega_{q,n,r}),\le)$ are in one-to-one correspondence with subsets of pairwise noncomparable elements of $T(\Omega_{q,n,r})$. Consequently, the enumeration of $\text{\rm GL}(n,\f_q)$-submodules of $H_q(r,n)$ is equivalent to the enumeration of subsets of pairwise noncomparable elements of $T(\Omega_{q,n,r})$. Let $\mathcal B$ denote the set of all of subsets of pairwise noncomparable elements of $T(\Omega_{q,n,r})$. For $B\in \mathcal B$, the corresponding ideal of $(T(\Omega_{q,n,r}),\le)$ is $I=\{\bt\in T(\Omega_{q,n,r}):\bt\le \bt'\ \text{for some}\ \bt'\in B\}$ and the corresponding GL-module $M(I)$ is the $\f_q$-span of all $X^{\ii}$ such that $T(\ii)\le \bt'$ for some $\bt'\in B$.

\begin{exmp}\rm
Let $p=2$, $m=3$, $q=8$, $n=4$ and $r=8$. Then
\[
T(\Omega_{8,4,8})=\{(t_0,t_1,8): (t_0,t_1)\ \text{given in Figure~\ref{F2}}\}.
\]
\begin{figure}
\[
\beginpicture
\setcoordinatesystem units <1.5em,1.5em> point at 0 0

\arrow <5pt> [.2,.67] from 0 0 to 0 10 
\arrow <5pt> [.2,.67] from 0 0 to 8 0 

\setdashes<1mm>
\plot 2 0  2 8 /
\plot 4 0  4 8 /
\plot 0 4  4 4 /
\plot 0 8  4 8 /

\put {$\bullet$} at 0 0
\put {$\bullet$} at 0 4
\put {$\bullet$} at 0 8
\put {$\bullet$} at 2 4
\put {$\bullet$} at 2 8
\put {$\bullet$} at 4 4
\put {$\bullet$} at 4 8

\put {$\scriptstyle t_0$} [t] at 8 -0.2
\put {$\scriptstyle t_1$} [r] at -0.2 10
\put {$\scriptstyle 4$} [r] at -0.2 4
\put {$\scriptstyle 8$} [r] at -0.2 8
\put {$\scriptstyle 2$} [t] at 2 -0.2
\put {$\scriptstyle 4$} [t] at 4 -0.2
\put {$\scriptstyle 0$} [t] at 0 -0.2

\endpicture
\]
\caption{$T(\Omega_{8,4,8})$}\label{F2}
\end{figure}

\noindent
The boundaries of the ideals of $(T(\Omega_{8,4,8}),\le)$ are given in Table~\ref{Tb1}, where $(t_0,t_1)$ stands for $(t_0,t_1,8)$.
\begin{table}
\caption{Ideals of $(T(\Omega_{8,4,8}),\le)$}\label{Tb1}
   \renewcommand*{\arraystretch}{1.2}
    \centering
     \begin{tabular}{l|l}       
         \hfil boundary  & \hfil ideal \\ \hline
         $\emptyset$ & $I_0=\emptyset$\\
         $\{(0,0)\}$ & $I_1=\{(0,0)\}$\\
         $\{(0,4)\}$ & $I_2=\{(0,0),(0,4)\}$\\
         $\{(2,4)\}$ & $I_3=\{(0,0),(0,4),(2,4)\}$\\
         $\{(0,8)\}$ & $I_4=\{(0,0),(0,4),(0,8)\}$\\
         $\{(4,4)\}$ & $I_5=\{(0,0),(0,4),(2,4),(4,4)\}$\\
         $\{(0,8),(2,4)\}$ & $I_6=\{(0,0),(0,4),(0,8),(2,4)\}$\\                 
         $\{(2,8)\}$ & $I_7=\{(0,0),(0,4),(0,8),(2,4),(2,8)\}$\\                  
         $\{(0,8),(4,4)\}$ & $I_8=\{(0,0),(0,4),(0,8),(2,4),(4,4)\}$\\
         $\{(2,8),(4,4)\}$ & $I_9=\{(0,0),(0,4),(0,8),(2,4),(2,8),(4,4)\}$\\
         $\{(4,8)\}$ & $I_{10}=\{(0,0),(0,4),(0,8),(2,4),(2,8),(4,4),(4,8)\}$\\
        
     \end{tabular}
\end{table}
Elements of $T^{-1}(I_j)$, $0\le j\le 10$, are given in Table~\ref{Tb2}, where $i_1i_2i_3i_4$ stands for $(i_1,i_2,i_3,i_4)$ and their permutations.
\begin{table}
\caption{$T^{-1}(I_j)$, $0\le j\le 10$}\label{Tb2}
   \renewcommand*{\arraystretch}{1.2}
    \centering
     \begin{tabular}{l|l}       
         \hfil $j$  & \hfil  elements of $T^{-1}(I_j)$ \\ \hline       
         0& $\emptyset$ \\
         1& 4400 \\
         2& 4400, 6200, 4220 \\
         3& 4400, 6200, 4220, 7100, 6110, 5300, 5210, 4310, 4211 \\
         4& 4400, 6200, 4220, 2222 \\
         5& 4400, 6200, 4220, 7100, 6110, 5300, 5210, 4310, 4211, 5111  \\
         6& 4400, 6200, 4220, 2222, 7100, 6110, 5300, 5210, 4310, 4211 \\
         7& 4400, 6200, 4220, 2222, 7100, 6110, 5300, 5210, 4310, 4211, 3320, 3221 \\
         8& 4400, 6200, 4220, 2222, 7100, 6110, 5300, 5210, 4310, 4211, 5111  \\ 
         9& 4400, 6200, 4220, 2222, 7100, 6110, 5300, 5210, 4310, 4211, 3320, 3221, 5111 \\    
         10& $\Omega_{8,4,8}$ \\                      
     \end{tabular}
\end{table}
The bases of the submodules $M(I_j)$ follow from Table~\ref{Tb2} immediately. For example, $M(I_4)$ is the $\f_8$-span of $X_1^4X_2^4,X_1^6X_2^2,X_1^4X_2^2X_3^2,X_1^2X_2^2X_3^2X_4^2$ and their permutations.

\end{exmp}

\section{Factors of the Composition Series of $H_q(r,n)$}

We follow the notation introduced in Section~3. In addition, for $I,I'\in\mathcal I$,  we write $I\subset_{\max} I'$ to mean  that $I\subsetneq I'$ and there is no $I''\in \mathcal I$ such that $I\subsetneq I''\subsetneq I'$.

A composition series of $H_q(r,n)$ is given by 
\begin{equation}\label{comp}
M(I_0)\subset M(I_1)\subset\cdots\subset M(I_N),
\end{equation}
where $I_0,I_1,\dots,I_N\in\mathcal I$ are such that
\begin{equation}\label{ideals}
\emptyset=I_0\subset_{\max} I_1\subset_{\max}\cdots \subset_{\max} I_N=T(\Omega_{q,n,r}).
\end{equation}
It is clear that $I_i\subset_{\max}I_{i+1}$ if and only if $I_i=I_{i+1}\setminus\{\bt\}$ for some maximal element $\bt$ in $I_{i+1}$. Therefore, the composition series \eqref{comp} can be obtained as follows: First, let $I_N=T(\Omega_{q,n,r})$. Choose a maximal element $\bt_N\in I_N$ and let $I_{N-1}=I_N\setminus\{\bt_N\}$. Choose a maximal element $\bt_{N-1}$ in $I_{N-1}$ and let $I_{N-2}=I_{N-1}\setminus\{\bt_{N-1}\}$. Continue this way until $I_0=\emptyset$. Clearly, $\bt_N,\bt_{N-1},\dots,\bt_1$ is an enumeration of all elements in $T(\Omega_{n,q,r})$, whence $N=|T(\Omega_{q,n,r})|$. The factors of the composition series \eqref{comp} are
\[
M(I_i)\,/\,M(I_{i-1})=M(I_i)\,/\,M(I_i\setminus\{\bt_i\}),\quad 1\le i\le N.
\]
The structure of the module $M(I_i)\,/\,M(I_i\setminus\{\bt_i\})$ depends only on $\bt_i$ but not on $I_i$. For $\bt\in T(\Omega_{q,n,r})$, let
\[
I(\bt)=\{\bt'\in T(\Omega_{q,n,r}):\bt'\le \bt\}\in\mathcal I.
\]

\begin{lem}\label{L4.1}
Let $I\in\mathcal I$ and $\bt$ be a maximal element of $I$. Then
\[
M(I)\,/\,M(I\setminus\{\bt\})\cong M(I(\bt))\,/\,M(I(\bt)\setminus\{\bt\}).
\]
\end{lem}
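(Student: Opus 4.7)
My plan is to exhibit an explicit $\f_q$-linear bijection between the two quotients and then use Lemma~\ref{L3.5} to show it respects the $\text{GL}(n,\f_q)$-action.

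First I would observe that both quotients have a canonical basis indexed by $T^{-1}(\bt)$. Since $\{X^\ii:\ii\in\Omega_{q,n,r}\}$ is an $\f_q$-basis of $H_q(r,n)$ and $M(J)$ is defined (via \eqref{M(I)}) as the $\f_q$-span of $\{X^\ii:\ii\in T^{-1}(J)\}$ for any ideal $J$, we have $T^{-1}(I)=T^{-1}(I\setminus\{\bt\})\sqcup T^{-1}(\bt)$, so $\{X^\ii+M(I\setminus\{\bt\}):\ii\in T^{-1}(\bt)\}$ is an $\f_q$-basis of $M(I)/M(I\setminus\{\bt\})$. The same argument shows $\{X^\ii+M(I(\bt)\setminus\{\bt\}):\ii\in T^{-1}(\bt)\}$ is an $\f_q$-basis of $M(I(\bt))/M(I(\bt)\setminus\{\bt\})$. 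Define
\[
\Phi:M(I)/M(I\setminus\{\bt\})\longrightarrow M(I(\bt))/M(I(\bt)\setminus\{\bt\})
\]
on these bases by $X^\ii+M(I\setminus\{\bt\})\mapsto X^\ii+M(I(\bt)\setminus\{\bt\})$; this is a well-defined $\f_q$-linear isomorphism.

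Next I would verify that $\Phi$ commutes with the action of every $A\in\text{GL}(n,\f_q)$. Fix $\ii\in T^{-1}(\bt)$. By Lemma~\ref{L3.5},
\[
A(X^\ii)=\sum_{\substack{\jj\in\Omega_{q,n,r}\\ T(\jj)\le \bt}}\alpha_{\jj}(A)\,X^{\jj}
=\sum_{\jj:\,T(\jj)=\bt}\alpha_{\jj}(A)\,X^{\jj}+\sum_{\jj:\,T(\jj)\lneqq \bt}\alpha_{\jj}(A)\,X^{\jj},
\]
with the same scalars $\alpha_\jj(A)$ regardless of which quotient we work in. The key observation is that the second sum lies in both $M(I\setminus\{\bt\})$ and in $M(I(\bt)\setminus\{\bt\})$: since $I$ is an ideal containing $\bt$ and $T(\jj)\lneqq \bt$ forces $T(\jj)\in I\setminus\{\bt\}$ (using maximality of $\bt$ in $I$, which gives $T(\jj)\ne\bt$), and similarly $T(\jj)\in I(\bt)\setminus\{\bt\}$ by the definition of $I(\bt)$. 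Hence modulo the respective submodules, $A(X^\ii)$ reduces to the \emph{same} linear combination $\sum_{\jj:\,T(\jj)=\bt}\alpha_\jj(A)X^\jj$ of basis elements, which means $\Phi(A(X^\ii+M(I\setminus\{\bt\})))=A(\Phi(X^\ii+M(I\setminus\{\bt\})))$. Extending $\f_q$-linearly, $\Phi$ is a $\text{GL}(n,\f_q)$-module isomorphism.

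I do not expect a serious obstacle here: the entire argument rests on Lemma~\ref{L3.5}, which guarantees that $A$ acts by a strictly upper-triangular-plus-diagonal pattern with respect to the partial order on $T(\Omega_{q,n,r})$. The only mild care needed is to justify that the ``diagonal block'' at $\bt$ is intrinsic to $\bt$ (independent of the surrounding ideal $I$), and this is precisely what the identity of scalars $\alpha_\jj(A)$ above guarantees. The rest is bookkeeping about ideals of the poset $(T(\Omega_{q,n,r}),\le)$.
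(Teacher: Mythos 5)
Your proof is correct, but it takes a genuinely different route from the paper's. The paper considers the composite $\phi\colon M(I(\bt))\to M(I)/M(I\setminus\{\bt\})$, $f\mapsto f+M(I\setminus\{\bt\})$, notes that it is surjective because $M(I)=M(I\setminus\{\bt\})+M(I(\bt))$, that $M(I(\bt)\setminus\{\bt\})\subset\ker\phi$, and that the induced surjection $\bar\phi$ from $M(I(\bt))/M(I(\bt)\setminus\{\bt\})$ must be an isomorphism because its domain is irreducible --- irreducibility being a consequence of Theorem~\ref{T3.8} together with $I(\bt)\setminus\{\bt\}\subset_{\max}I(\bt)$. You instead write down an explicit bijection between the common monomial bases indexed by $T^{-1}(\bt)$ and verify $\text{GL}(n,\f_q)$-equivariance directly from the triangular shape of the expansion in Lemma~\ref{L3.5}: the coefficients $\alpha_\jj(A)$ live in $H_q(r,n)$ and are therefore the same for both quotients, and all terms with $T(\jj)\lneqq\bt$ land in both $M(I\setminus\{\bt\})$ (since $I$ is an ideal containing $\bt$) and $M(I(\bt)\setminus\{\bt\})$ (by definition of $I(\bt)$). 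Your argument is slightly more self-contained, since it never invokes the irreducibility of $M(I(\bt))/M(I(\bt)\setminus\{\bt\})$; the paper's argument is shorter given that Theorem~\ref{T3.8} is already available. Both are valid, and both ultimately rest on the same structural facts (Lemma~\ref{L3.5} and the description of submodules as $M(J)$ for ideals $J$).
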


\begin{proof}
Define a GL-module map
\[
\begin{array}{cccc}
\phi:&M(I(\bt))&\longrightarrow &M(I)\,/\,M(I\setminus\{\bt\})\vspace{0.2em}\cr
&f&\longmapsto &f+M(I\setminus\{\bt\}).\cr
\end{array}
\]
Since $M(I)=M(I\setminus\{\bt\})+M(I(\bt))$, $\phi$ is onto. Since $I(\bt)\setminus\{\bt\}\subset I\setminus\{\bt\}$, we have $M(I(\bt)\setminus\{\bt\})\subset M(I\setminus\{\bt\})$, whence $M(I(\bt)\setminus\{\bt\})\subset\ker\phi$. Thus $\phi$ induces an onto GL-module map
\[
\bar\phi:M(I(\bt))\,/\,M(I(\bt)\setminus\{\bt\})\longrightarrow M(I)\,/\,M(I\setminus\{\bt\}).
\]
Since $I(\bt)\setminus\{\bt\}\subset_{\max}I(\bt)$, $M(I(\bt))\,/\,M(I(\bt)\setminus\{\bt\})$ is an irreducible GL-module. It follows that $\bar\phi$ is an isomorphism. 
\end{proof}

For $\bt\in T(\Omega_{q,n,r})$, let
\[
\frak M(\bt)=M(I(\bt))\,/\,M(I(\bt)\setminus\{\bt\}).
\]
The structure of the GL-module $\frak M(\bt)$ is easy to describe (with a little abuse of notation). It has a basis $\{X^\ii: \ii\in T^{-1}(\bt)\}$ over $\f_q$. When $A\in\text{GL}(n,\f_q)$ acts on $X^\ii$, in the expansion \eqref{AXi} of $A(X^\ii)$, only the terms $\alpha_\jj X^\jj$ with $\jj\in T^{-1}(\bt)$ are kept. We have $\dim_{\f_q}\frak M(\bt)=|T^{-1}(\bt)|$, which can be made explicit. 

\begin{lem}\label{L-dim}
Let $\bt=(t_0,\dots,t_{m-1})\in T(\Omega_{q,n,r})$ and 
\begin{equation}\label{s-t}
(s_0,\dots,s_{m-1})=\Bigl(t_0,\frac{t_1-t_0}p,\dots,\frac{t_{m-1}-t_{m-2}}{p^{m-1}}\Bigr).
\end{equation}
Then
\begin{equation}\label{dim}
\dim_{\f_q}\frak M(\bt)=|T^{-1}(\bt)|=\prod_{j=0}^{m-1}\Bigl(\sum_{0\le k\le s_j/p}(-1)^k\binom nk\binom{n-1+s_j-kp}{n-1}\Bigr).
\end{equation}
\end{lem}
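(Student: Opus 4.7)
The plan is to reduce the enumeration $|T^{-1}(\bt)|$ to a column-by-column count on the digit matrices $D(\ii)$, and then evaluate each column count by inclusion-exclusion.

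First, I would observe that the first equality $\dim_{\f_q}\frak M(\bt)=|T^{-1}(\bt)|$ is immediate from the description of $\frak M(\bt)$ recorded just before the lemma: $\frak M(\bt)=M(I(\bt))/M(I(\bt)\setminus\{\bt\})$ has the set $\{X^\ii+M(I(\bt)\setminus\{\bt\}):\ii\in T^{-1}(\bt)\}$ as an $\f_q$-basis, by construction of $M(I)$ in \eqref{M(I)} together with the fact that elements of $I(\bt)\setminus\{\bt\}$ are strictly below $\bt$.

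Next I would translate the condition $T(\ii)=\bt$ into a constraint purely on the digit matrix $D(\ii)$. From the matrix form of \eqref{Ti}, the tuple $T(\ii)$ is determined by the column sum vector $\left[\begin{matrix}1&\cdots&1\end{matrix}\right]D(\ii)$, and vice versa; indeed, an easy computation using \eqref{s-t} shows that $T(\ii)=\bt$ if and only if the $k$th column of $D(\ii)$ sums to $s_k$ for every $0\le k\le m-1$. Since the rows of $D(\ii)$ range independently over tuples $(i_{j0},\dots,i_{j,m-1})\in\{0,\dots,p-1\}^m$ that determine the digits of $i_j$, and the column constraints are independent across columns, we obtain
\[
|T^{-1}(\bt)|=\prod_{j=0}^{m-1}N(n,s_j),
\]
where $N(n,s)$ denotes the number of tuples $(c_1,\dots,c_n)\in\{0,1,\dots,p-1\}^n$ with $c_1+\cdots+c_n=s$.

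Finally, I would compute $N(n,s)$ by inclusion-exclusion. The number of tuples $(c_1,\dots,c_n)\in\Bbb N^n$ with $c_1+\cdots+c_n=s$ and no upper bound on the $c_i$ is the stars-and-bars count $\binom{n-1+s}{n-1}$. For each $k$, the number of such tuples in which a prescribed set of $k$ coordinates satisfies $c_i\ge p$ is $\binom{n-1+s-kp}{n-1}$ (by the substitution $c_i\mapsto c_i-p$ on those coordinates, with the convention that $\binom{n-1+s-kp}{n-1}=0$ once $s-kp<0$). Hence
\[
N(n,s)=\sum_{0\le k\le s/p}(-1)^k\binom nk\binom{n-1+s-kp}{n-1},
\]
and substituting this into the product gives \eqref{dim}.

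The argument is essentially bookkeeping; the only slightly delicate point will be justifying that the column-sum vector of $D(\ii)$ really is equivalent data to $T(\ii)$ in the range allowed by \eqref{TOmega}, which I would handle by quoting the identity exhibited immediately after \eqref{Ti}.
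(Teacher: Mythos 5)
Your proposal is correct and follows essentially the same route as the paper: translate $T(\ii)=\bt$ into prescribed column sums $s_j$ on $D(\ii)$, factor the count as $\prod_j N(n,s_j)$, and evaluate each factor by inclusion--exclusion. The only cosmetic difference is in the last step, where you count directly via inclusion--exclusion on coordinates $\ge p$ whereas the paper extracts the coefficient of $X^{s_j}$ from $(1+X+\cdots+X^{p-1})^n=(1-X^p)^n(1-X)^{-n}$; these are the same computation in different notation.
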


\begin{proof}
It is straightforward from \eqref{s-t} that 
\begin{equation}\label{t-s}
(t_0,\dots,t_{m-1})=(s_0,\dots,s_{m-1})\left[\begin{matrix}p^0&p^0&\cdots&p^0\cr &p^1&\cdots&p^1\cr &&\ddots&\vdots\cr &&&p^{m-1}\end{matrix}\right].
\end{equation}
By \eqref{Ti}, $\ii\in T^{-1}(\bt)$ if and only if 
\[
\left[\begin{matrix}1&\cdots&1\end{matrix}\right]
D(\ii)
\left[\begin{matrix}p^0&p^0&\cdots&p^0\cr &p^1&\cdots&p^1\cr &&\ddots&\vdots\cr &&&p^{m-1}\end{matrix}\right]=T(\ii)=(t_0,\dots,t_{m-1}).
\]
In light of \eqref{t-s}, this happens if and only if
\begin{equation}\label{4.1}
\left[\begin{matrix}1&\cdots&1\end{matrix}\right]D(\ii)=(s_0,\dots,s_{m-1}).
\end{equation}
Write
\[
D(\ii)=
\left[\begin{matrix}i_{10}&\cdots&i_{1,m-1}\cr\vdots&&\vdots\cr i_{n0}&\cdots&i_{n,m-1}\end{matrix}\right].
\]
Then \eqref{4.1} is satisfied if and only if for all $0\le j\le m-1$,
\begin{equation}\label{4.2}
i_{1j}+\cdots+i_{nj}=s_j,\quad 0\le i_{1j},\dots,i_{nj}\le p-1.
\end{equation}
The number of $(i_{1j},\dots,i_{nj})$ satisfying \eqref{4.2}, denoted by $N_j$, is the coefficient of $X^{s_j}$ in $(1+X+\cdots+X^{p-1})^n$. We have
\begin{align*}
&(1+X+\cdots+X^{p-1})^n=\Bigl(\frac{1-X^p}{1-X}\Bigr)^n=(1-X^p)^n(1-X)^{-n}\cr
&=\Bigl(\sum_k\binom nk(-X^p)^k \Bigr)\Bigl(\sum_l\binom{-n}l(-X)^l\Bigr)=\sum_{k,l}\binom nk\binom{-n}l(-1)^{k+l}X^{kp+l}.
\end{align*}
Hence
\begin{align*}
N_j\,&=\sum_{kp+l=s_j}\binom nk\binom{-n}l(-1)^{k+l}\cr
&=\sum_{kp+l=s_j}(-1)^k\binom nk\binom{n+l-1}l\kern2em (\text{since}\ (-1)^l\binom{-n}l=\binom{n+l-1}l\,)\cr
&=\sum_{\substack{0\le k\le n\cr s_j-kp\ge 0}}(-1)^k\binom nk\binom{n+s_j-kp-1}{n-1}.
\end{align*}
By \eqref{TOmega} and \eqref{s-t}, $s_j/p\le n(p-1)/p<n$, whence $k\le s_j/p$ implies $k\le n$. Therefore, the effective range for $k$ in the above sum is $0\le k\le s_j/p$. Now,
\[
|T^{-1}(\bt)|=N_0\cdots N_{m-1}=\prod_{j=0}^{m-1}\Bigl(\sum_k(-1)^k\binom nk\binom{n-1+s_j-kp}{n-1}\Bigr).
\]
\end{proof}

\begin{lem}\label{L4.3}
If $\bt,\bt'\in T(\Omega_{q,n,r})$ are such that $\frak M(\bt)\cong\frak M(\bt')$, then $\bt=\bt'$.
\end{lem}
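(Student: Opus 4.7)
The plan is to distinguish $\frak M(\bt)$ from $\frak M(\bt')$ (for $\bt\ne\bt'$) by restricting them to the diagonal torus $\mathcal T=\{\text{diag}(\lambda_1,\dots,\lambda_n):\lambda_j\in\f_q^*\}\subset\text{GL}(n,\f_q)$. Since $|\mathcal T|=(q-1)^n$ is coprime to $p=\text{char}\,\f_q$, Maschke's theorem gives that every $\f_q\mathcal T$-module decomposes as a direct sum of $\mathcal T$-character isotypic components, so the multiplicities of these components are GL-isomorphism invariants of $\frak M(\bt)$.

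First I would describe the $\mathcal T$-action explicitly. Each basis monomial $X^\ii$ (with $\ii\in T^{-1}(\bt)$) is a $\mathcal T$-eigenvector on which $\text{diag}(\lambda_1,\dots,\lambda_n)$ acts by the scalar $\lambda_1^{i_1}\cdots\lambda_n^{i_n}$, so the character afforded by $X^\ii$ depends only on $\ii$ modulo $q-1$ componentwise. For $i_j,i_j'\in\{0,1,\dots,q-1\}$, the congruence $i_j\equiv i_j'\pmod{q-1}$ forces either $i_j=i_j'$ or $\{i_j,i_j'\}=\{0,q-1\}$.

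The key combinatorial step, and the heart of the argument, is the claim that if $\ii,\ii'\in\Omega_{q,n,r}$ are componentwise congruent modulo $q-1$, then $T(\ii)=T(\ii')$. To prove this, set $S_+=\{j:i_j=q-1,\,i_j'=0\}$ and $S_-=\{j:i_j=0,\,i_j'=q-1\}$. The condition $|\ii|=|\ii'|=r$ gives $(q-1)(|S_+|-|S_-|)=0$, so $|S_+|=|S_-|$. The matrices $D(\ii)$ and $D(\ii')$ differ only in the rows indexed by $S_+\cup S_-$: for $j\in S_+$ row $j$ of $D(\ii)$ is $(p-1,\dots,p-1)$ while row $j$ of $D(\ii')$ is $(0,\dots,0)$, and conversely for $j\in S_-$. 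Each column sum of $D(\ii)$ therefore exceeds the corresponding one of $D(\ii')$ by $(p-1)(|S_+|-|S_-|)=0$, and since $T(\ii)$ is determined by the column sums of $D(\ii)$, the equality $T(\ii)=T(\ii')$ follows.

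With this claim in hand, each equivalence class modulo $q-1$ inside $\Omega_{q,n,r}$ lies in a single fiber of $T$, so the sets of $\mathcal T$-characters occurring in $\frak M(\bt)$ and in $\frak M(\bt')$ are disjoint whenever $\bt\ne\bt'$. Any GL-isomorphism $\frak M(\bt)\cong\frak M(\bt')$ restricts to a $\mathcal T$-isomorphism and must match these character multisets, forcing $\bt=\bt'$. I expect the main obstacle to be isolating a sharp enough invariant; the dimension count of Lemma~\ref{L-dim} alone is not enough, but the combinatorial rigidity of $T(\ii)$ under componentwise reduction modulo $q-1$ ensures that the simple $\mathcal T$-character already suffices.
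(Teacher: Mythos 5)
Your proof is correct, and it follows a genuinely different route from the paper's. The paper proceeds by induction on $n$: it fixes the last variable, uses the one-parameter torus $\operatorname{diag}(1,\dots,1,\lambda)$ to show (almost) that an isomorphism $f$ respects the decomposition $\frak M(\bt)=\bigoplus_k X_n^k M_k$, and then invokes a shear $X_n\mapsto X_n-X_i$ to rule out leakage from the $X_n^0$-component into the $X_n^{q-1}$-component before applying the induction hypothesis to $M_k\cong M_k'$. You instead restrict to the full diagonal torus $\mathcal T\cong(\f_q^*)^n$; since $\f_q$ contains the $(q-1)$th roots of unity and $p\nmid(q-1)^n$, $\f_q\mathcal T$ is split semisimple and the $\mathcal T$-characters appearing in $\frak M(\bt)$ form an isomorphism invariant. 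Your combinatorial lemma --- that componentwise congruence mod $q-1$ of two elements of $\Omega_{q,n,r}$ of the same total degree forces equal column sums of their digit matrices and hence equal $T$-value, because the $S_+,S_-$ cancellation makes the degree constraint cost nothing at the level of digits --- shows those character sets are disjoint for distinct $\bt$, which finishes the argument in one stroke with no induction and no shear. The paper's shear-plus-induction handles exactly the $\lambda^0=\lambda^{q-1}$ degeneracy that your lemma dissolves combinatorially; your approach is shorter and more transparent for this statement, though the shear technique is still the one the paper needs in Theorem~\ref{T4.5}, where distinct total degrees $r_1\ne r_2$ prevent the $S_+,S_-$ cancellation from going through.
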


\begin{proof}
If $r=n(q-1)$, then $\bt=\bt'=T(\ii)$, where
\[
D(\ii)=\left[\begin{matrix}p-1&\cdots &p-1\cr \vdots&&\vdots\cr p-1&\cdots&p-1\end{matrix}\right].
\]
So we assume that $r<n(q-1)$.

We use induction on $n$. When $n=1$, let $\ii\in T^{-1}(\bt)$, $\ii'\in T^{-1}(\bt')$, and write $D(\ii)=(i_{10},\dots,i_{1,m-1})$, $D(\ii')=(i_{10}',\dots,i_{1,m-1}')$, where $0\le i_{1j},i_{1j}'\le p-1$, $0\le j\le m-1$. Then
\[
i_{10}p^0+\cdots+i_{1,m-1}p^{m-1}=r=i_{10}'p^0+\cdots+i_{1,m-1}'p^{m-1},
\]
whence $D(\ii)=D(\ii')$. Therefore, $\ii=\ii'$, and hence $\bt=\bt'$.

Now assume $n>0$. Since $\frak M(\bt)$ is generated over $\f_q$ by $X^\ii$, $\ii\in T^{-1}(\bt)$, we have
\[
\frak M(\bt)=X_n^0M_0+\cdots+X_n^{q-1}M_{q-1},
\]
where $M_k$ is generated over $\f_q$ by $(X_1,\dots,X_{n-1})^\jj$ with $\jj\in\Omega_{q,n-1,r-k}$ such that $(\jj,k)\in T^{-1}(\bt)$. In the same way, 
\[
\frak M(\bt')=X_n^0M_0'+\cdots+X_n^{q-1}M_{q-1}',
\]
where $M_k'$ is generated over $\f_q$ by $(X_1,\dots,X_{n-1})^\jj$ with $\jj\in\Omega_{q,n-1,r-k}$ such that $(\jj,k)\in T^{-1}(\bt')$.  Let $f:\frak M(\bt)\to\frak M(\bt')$ be the given isomorphism. We claim that
\begin{equation}\label{claim}
f(X_n^kM_k)\subset X_n^kM_k',\quad 0\le k\le q-2.
\end{equation}
Let $\alpha\in X_n^kM_k$, where $0\le k\le q-2$, and write
\[
f(\alpha)=\beta_0+\cdots+\beta_{q-1},
\]
where $\beta_k\in X_n^kM_k'$. Let $A\in\text{GL}(n,\f_q)$ be such that 
\[
(X_1,\dots,X_n)A=(X_1,\dots,X_{n-1},\lambda X_n).
\]
Then
\begin{align*}
\lambda^kf(\alpha)\,&=f(\lambda^k\alpha)=f(A(\alpha))=A(f(\alpha))\cr
&=A(\beta_0+\cdots+\beta_{q-1})=\lambda^0\beta_0+\cdots+\lambda^{q-1}\beta_{q-1}.
\end{align*}
Since this is true for all $\lambda\in\f_q^*$, we have
\[
f(\alpha)=\begin{cases}
\beta_k&\text{if}\ 1\le k\le q-2,\cr
\beta_0+\beta_{q-1}&\text{if}\ k=0.
\end{cases}
\]
We only have to show that when $k=0$, $\beta_{q-1}=0$. Assume to the contrary that $\beta_{q-1}\ne 0$. Write $\beta_{q-1}=X_n^{q-1}u$, where $0\ne u\in M_{q-1}'$. Since $r<n(q-1)$, $\deg u<(n-1)(q-1)$. Let $1\le i\le n-1$ and let $A\in\text{GL}(n,\f_q)$ be such that 
\[
(X_1,\dots,X_n)A=(X_1,\dots,X_{n-1},X_n-X_i).
\]
Then
\begin{align*}
0\,&=f(A(\alpha))-f(\alpha)=A(\beta_0+X_n^{q-1}u)-(\beta_0+X_n^{q-1}u)\cr
&=\bigl((X_n-X_i)^{q-1}-X_n^{q-1}\bigr)u=(X_i^{q-1}+X_i^{q-2}X_n+\cdots+X_iX_n^{q-2})u.
\end{align*}
It follows that $u=X_i^{q-1}u_i$ for some homogeneous polynomial $u_i$ in $X_1,\dots,X_{n-1}$. Since this is true for all $1\le i\le n-1$, we have $u=X_1^{q-1}\cdots X_{n-1}^{q-1}u'$ for some homogeneous polynomial $u_i$ in $X_1,\dots,X_{n-1}$. This is impossible since $\deg u<(n-1)(q-1)$.
Hence \eqref{claim} is proved.

By symmetry, $f^{-1}(X_n^kM_k')\subset X_n^kM_k$ for $0\le k\le q-2$. Hence for $0\le k\le q-2$, the restriction $f:X_n^kM_k\to X_n^kM_k'$ is an $\f_q$-isomorphism. For $\alpha\in M_k$, write $f(X_n^k\alpha)=X_n^kf_k(\alpha)$, where $f_k(\alpha)\in M_k'$. Then $f_k: M_k\to M_k'$ is a $\text{GL}(n-1,\f_q)$-module isomorphism.

Let $\ii\in T^{-1}(\bt)$ and write
\[
D(\ii)=
\left[\begin{matrix}i_{10}&\cdots&i_{1,m-1}\cr\vdots&&\vdots\cr i_{n0}&\cdots&i_{n,m-1}\end{matrix}\right].
\]
Since $r<n(q-1)$, we may assume that $(i_{n0},\dots,i_{n,m-1})\ne(p-1,\dots,p-1)$. Let $k=i_{n0}p^0+\cdots+i_{n,m-1}p^{m-1}$. Then $0\le k\le q-2$. We have $M_k=\frak M(\btau)$ and $M_k'=\frak M(\btau')$, where
\[
\btau=\bt-(i_{n0},\dots,i_{n,m-1})\left[\begin{matrix}p^0&p^0&\cdots&p^0\cr &p^1&\cdots&p^1\cr &&\ddots&\vdots\cr &&&p^{m-1}\end{matrix}\right]\in T(\Omega_{q,n-1,r-k})
\]
and  
\[
\btau'=\bt'-(i_{n0},\dots,i_{n,m-1})\left[\begin{matrix}p^0&p^0&\cdots&p^0\cr &p^1&\cdots&p^1\cr &&\ddots&\vdots\cr &&&p^{m-1}\end{matrix}\right]\in T(\Omega_{q,n-1,r-k});
\]
these claims follow from the definitions of $M_k$, $M_k'$, $\frak M(\btau)$ and $\frak M(\btau')$. Since $\frak M(\btau)\cong\frak M(\btau')$, by the induction hypothesis, $\btau=\btau'$, whence $\bt=\bt'$.
\end{proof}

We summarize the facts about the composition series of $H_q(r,n)$ in the following theorem.

\begin{thm}\label{T4.4}
The composition factors of $H_q(r,n)$ are $\frak M(\bt)$, $\bt\in T(\Omega_{q,n,r})$, each appearing exactly once. These factors are pairwise nonisomorphic and their dimensions are given in \eqref{dim}. The length of the composition series of $H_q(r,n)$ is $|T(\Omega_{q,n,r})|$.
\end{thm}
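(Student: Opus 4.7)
The plan is to observe that Theorem~\ref{T4.4} is essentially a direct assembly of results already in place: Theorem~\ref{T3.8} translates submodules into ideals, Lemma~\ref{L4.1} identifies each composition factor as some $\frak M(\bt)$, Lemma~\ref{L4.3} gives pairwise nonisomorphism, and Lemma~\ref{L-dim} supplies the dimensions. Essentially the entire argument is bookkeeping on top of these ingredients.

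First I would write down an explicit composition series using the recipe spelled out in the paragraph preceding the theorem. Starting from $I_N=T(\Omega_{q,n,r})$, successively pick a maximal element $\bt_i\in I_i$ and set $I_{i-1}=I_i\setminus\{\bt_i\}$ until $I_0=\emptyset$. This produces a chain
\[
\emptyset=I_0\subset_{\max}I_1\subset_{\max}\cdots\subset_{\max}I_N
\]
with $N=|T(\Omega_{q,n,r})|$, in which $\bt_1,\dots,\bt_N$ enumerates $T(\Omega_{q,n,r})$ without repetition. Applying the order-preserving bijection $\Phi$ of Theorem~\ref{T3.8} yields a chain of $\text{GL}(n,\f_q)$-submodules
\[
M(I_0)\subset M(I_1)\subset\cdots\subset M(I_N)=H_q(r,n).
\]
Each extension in this chain is irreducible: any intermediate submodule $M(I_{i-1})\subsetneq M\subsetneq M(I_i)$ would, via Theorem~\ref{T3.8}, correspond to an ideal of $(T(\Omega_{q,n,r}),\le)$ strictly between $I_{i-1}$ and $I_i$, contradicting the meaning of $\subset_{\max}$.

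Next I would invoke Lemma~\ref{L4.1} to identify the $i$th factor $M(I_i)/M(I_{i-1})$ with $\frak M(\bt_i)$. Because the $\bt_i$ exhaust $T(\Omega_{q,n,r})$ each exactly once, every $\frak M(\bt)$ appears as a composition factor exactly once in this series; Lemma~\ref{L4.3} ensures these factors are pairwise nonisomorphic; Lemma~\ref{L-dim} supplies the dimension formula \eqref{dim}; and the length $N=|T(\Omega_{q,n,r})|$ is read off from the construction. To turn this description into an invariant of $H_q(r,n)$ rather than of the particular series chosen, I would invoke the Jordan--H\"older theorem: the multiset of factors of any composition series coincides with the one exhibited here. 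The hard work was already carried out in Theorem~\ref{T3.8} (submodule classification) and in Lemmas~\ref{L4.1} and \ref{L4.3} (identification and distinctness of the factors); no genuine obstacle remains at this stage, so the proof is largely an exercise in unpacking those results.
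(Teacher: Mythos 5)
Your proposal is correct and essentially matches the paper's treatment: the paper states Theorem~\ref{T4.4} without a separate formal proof, presenting it explicitly as a ``summary'' of the preceding material (the maximal chain of ideals described just before the theorem, Lemma~\ref{L4.1} for factor identification, Lemma~\ref{L-dim} for the dimension, Lemma~\ref{L4.3} for pairwise nonisomorphism). Your unpacking, including the observation that $\subset_{\max}$ in $\mathcal I$ transports via the order isomorphism $\Phi$ of Theorem~\ref{T3.8} to a maximal inclusion of submodules so that each quotient is simple, and the final appeal to Jordan--H\"older to make the multiset of factors an invariant of $H_q(r,n)$ rather than of the chosen chain, is exactly the bookkeeping the paper leaves implicit.
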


There does not seem to be an explicit formula for the number $|T(\Omega_{q,n,r})|$. However, the generating function $\sum_r|T(\Omega_{q,n,r})|X^r$ can be easily determined. By \eqref{Ti}, we have
\begin{align*}
&|T(\Omega_{q,n,r})|\cr
=\,&|\{(s_0,\dots,s_{m-1})\in\Bbb N^m: 0\le s_i\le n(p-1),\ s_0p^0+\cdots+s_{m-1}p^{m-1}=r\}|\cr
=\,&\text{the coefficient of $X^r$ in}\ \prod_{k=0}^{m-1}(1+X^{p^k}+X^{2p^k}+\cdots+X^{n(p-1)p^k}).
\end{align*}
Hence
\[
\sum_r|T(\Omega_{q,n,r})|X^r=\prod_{k=0}^{m-1}\frac{1-X^{(n(p-1)+1)p^k}}{1-X^{p^k}}.
\]
The length of a composition series of $\mathcal F(\f_q^n,\f_q)$ (as a GL-module or as an AGL-module) is
\[
\sum_r|T(\Omega_{q,n,r})|=\bigl(n(p-1)+1)^m.
\]
In comparison, the ascending chain of Reed-Muller codes 
\[
\{0\}=R_q(-1,n)\subset R_q(0,n)\subset\cdots\subset R_q(n(q-1),n)=\mathcal F(\f_q^n,\f_q)
\]
has length $n(q-1)+1$.

Finally, we address the following question: If $\bt_1\in T(\Omega_{q,n,r_1})$ and  $\bt_2\in T(\Omega_{q,n,r_2})$, where $r_1\ne r_2$, is it possible that $\frak M(\bt_1)\cong\frak M(\bt_2)$? If $r_1=0$ and $r_2=n(q-1)$, then $\frak M(\bt_1)$ is the 1-dimensional $\f_q$-space generated by $1$, $\frak M(\bt_2)$ is the 1-dimensional $\f_q$-space generated by $X_1^{q-1}\cdots X_n^{q-1}$, and $\text{GL}(n,\f_q)$ acts trivially on both $\frak M(\bt_1)$ and $\frak M(\bt_2)$. Therefore $\frak M(\bt_1)\cong\frak M(\bt_2)$. However, this is the only case where an isomorphism occurs.

\begin{thm}\label{T4.5}
Let $0\le r_1<r_2\le n(q-1)$ be such that $(r_1,r_2)\ne(0,n(q-1))$ and let $\bt_1\in T(\Omega_{q,n,r_1})$ and $\bt_2\in T(\Omega_{q,n,r_2})$. Then $\frak M(\bt_1)\not\cong\frak M(\bt_2)$.
\end{thm}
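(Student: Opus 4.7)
The overall plan is induction on $n$, using the central-character action as the primary invariant and the $X_n$-decomposition from the proof of Lemma~\ref{L4.3} as the inductive tool. The scalar matrix $\lambda I_n \in \text{GL}(n, \f_q)$ acts on $\frak M(\bt_k)$ by the scalar $\lambda^{r_k}$, since every monomial of total degree $r_k$ is scaled by $\lambda^{r_k}$. Hence any isomorphism $\frak M(\bt_1) \cong \frak M(\bt_2)$ forces $r_1 \equiv r_2 \pmod{q-1}$. Combined with $0 \le r_1 < r_2 \le n(q-1)$ and $(r_1, r_2) \ne (0, n(q-1))$, this yields $r_2 - r_1 = k(q-1)$ with $1 \le k \le n-1$; in particular, the case $n = 1$ is immediate.

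For $n \ge 2$, first dispose of the boundary case $r_2 = n(q-1)$: then $T^{-1}(\bt_2) = \{(q-1, \ldots, q-1)\}$, so $\frak M(\bt_2)$ is the trivial $1$-dimensional module and therefore so is $\frak M(\bt_1)$. A direct calculation shows that the $1$-dimensional $\frak M(\bt)$'s are precisely the characters $\det^{c}$ with $c = (p-1)\sum_l \epsilon_l p^l$ for some $\epsilon_l \in \{0,1\}$, and that the trivial ones have $c \equiv 0 \pmod{q-1}$, i.e., $r \in \{0, n(q-1)\}$. Hence $r_1 = 0$, giving the excluded pair---a contradiction. So assume $r_2 < n(q-1)$, suppose $f \colon \frak M(\bt_1) \to \frak M(\bt_2)$ is an isomorphism, and write $\frak M(\bt_k) = \bigoplus_{j=0}^{q-1} X_n^j M_{k, j}$, where $M_{k, j} \cong \frak M(\btau_{k, j})$ for some $\btau_{k, j} \in T(\Omega_{q, n-1, r_k - j})$. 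Mirroring the proof of Lemma~\ref{L4.3}, the action of $\text{diag}(I_{n-1}, \lambda)$ identifies $f$'s eigenspaces, and the transvection $(X_1, \ldots, X_n) \mapsto (X_1, \ldots, X_{n-1}, X_n - X_i)$ together with the degree bound $r_2 - (q-1) < (n-1)(q-1)$---valid precisely because $r_2 < n(q-1)$---refines this to $\text{GL}(n-1, \f_q)$-isomorphisms $M_{1, j} \cong M_{2, j}$ for every $j \in [0, q-2]$.

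Since $f$ is bijective, $M_{1, j}$ is zero iff $M_{2, j}$ is zero; and because $r_1 < n(q-1)$, some $\ii \in T^{-1}(\bt_1)$ has $i_n \ne q-1$, so some $j \in [0, q-2]$ has $M_{1, j}$ (and hence $M_{2, j}$) nonzero. Applying the inductive hypothesis to $\frak M(\btau_{1, j}) \cong \frak M(\btau_{2, j})$ at the distinct degrees $r_1 - j < r_2 - j$ forces $(r_1 - j, r_2 - j) = (0, (n-1)(q-1))$. Then $r_1 = j$, so $\bt_1 = (0, \ldots, 0)$ and $\frak M(\bt_1)$ is the trivial $1$-dimensional module; hence $\frak M(\bt_2)$ is also trivial $1$-dimensional, forcing $r_2 \in \{0, n(q-1)\}$ by the classification of trivial $1$-dimensional $\frak M(\bt)$'s. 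This contradicts $r_2 = (n-1)(q-1)$, completing the proof. The chief technical hurdle is adapting the transvection-and-degree analysis of Lemma~\ref{L4.3}, originally carried out for two elements of $T(\Omega_{q, n, r})$ at the same degree, to the present setting where the source and target have different degrees $r_1 \ne r_2$; the key observation is that the strict inequality $r_2 < n(q-1)$ still forces the auxiliary element $u \in M_{2, q-1}$ to vanish, exactly as in the original argument.
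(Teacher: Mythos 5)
Your approach is genuinely different from the paper's: the paper gives a direct argument using digit transfers, a $\pmod{q-1}$ constraint forced by diagonal matrices, and a carry-free choice of exponent to isolate a monomial that cannot occur; you instead use induction on $n$, reusing the $X_n$-decomposition and transvection analysis from the proof of Lemma~\ref{L4.3}. The opening is fine: the scalar argument gives $r_1\equiv r_2\pmod{q-1}$, the $n=1$ case follows, and the identification of one-dimensional $\frak M(\bt)$'s with determinantal characters (for $n\ge 2$, enforced by $N_j\in\{1\}$ requiring $s_j\in\{0,n(p-1)\}$) correctly disposes of $r_2=n(q-1)$. The extension of the transvection/degree argument to $r_1\ne r_2$ is legitimate because both $r_1, r_2<n(q-1)$.

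However, there is a genuine gap in the inductive step. From $(r_1-j,\,r_2-j)=(0,(n-1)(q-1))$, i.e.\ $r_1=j$, you assert ``so $\bt_1=(0,\ldots,0)$ and $\frak M(\bt_1)$ is the trivial $1$-dimensional module.'' This is false whenever $j>0$: the equation $r_1=j$ only tells you $(0,\ldots,0,j)\in T^{-1}(\bt_1)$, and $\bt_1=T((0,\ldots,0,j))$ is a nonzero $m$-tuple for $j>0$, with $T^{-1}(\bt_1)$ in general much bigger than a single point. Concretely, take $q=4$, $n=2$, $j=1$, $r_1=1$: then $T^{-1}(\bt_1)=\{(1,0),(0,1)\}$ and $\frak M(\bt_1)$ is the $2$-dimensional natural $\text{GL}(2,\f_4)$-module, certainly not trivial. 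The deduction ``$\bt_1=0$'' thus needs justification that is currently missing. One way to repair it: the inductive conclusion applies to \emph{every} $j'\in[0,q-2]$ with $M_{1,j'}\ne 0$, forcing $r_1=j'$; hence only one such $j'$ can exist. Combined with $M_{1,q-1}=0$ (since $r_1=j<q-1$), every $\ii\in T^{-1}(\bt_1)$ has $i_n=j$. But $T^{-1}(\bt_1)$ is invariant under coordinate permutations (as $T$ depends only on the column sums of $D(\ii)$), so every coordinate of every such $\ii$ equals $j$, giving $r_1=nj$; together with $r_1=j$ and $n\ge 2$ this forces $j=0$, after which your argument goes through. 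Without some such supplement, the proof is incomplete at exactly the point where the contradiction is supposed to appear.
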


\begin{proof}
If $r_1=0$, then $0<r_2<n(q-1)$. It is easy to see that $\dim_{\f_q}\frak M(\bt_1)=1<\dim_{\f_q}\frak M(\bt_2)$, whence $\frak M(\bt_1)\not\cong\frak M(\bt_2)$. So assume $r_1>0$.

Assume to the contrary that there is an isomorphism $f:\frak M(\bt_1)\to\frak M(\bt_2)$. Let $\ii=(i_1,\dots,i_n)\in T^{-1}(\bt_1)$, whence $X^\ii\in\frak M(\bt_1)$. Write
\[
f(X^\ii)=\sum_{\jj\in T^{-1}(\bt_2)}\alpha_\jj X^\jj,\quad \alpha_\jj\in\f_q.
\]
Let $\epsilon$ be a primitive element of $\f_q$. Let $A\in\text{GL}(n,\f_q)$ be such that 
\[
(X_1,\dots,X_n)A=(\epsilon^{a_1}X_1,\dots,\epsilon^{a_n}X_n),
\]
where $(a_1,\dots,a_n)\in(\Bbb Z/(q-1)\Bbb Z)^n$. We have
\begin{align*}
&\epsilon^{a_1i_1+\cdots+a_ni_n}\sum_{\jj\in T^{-1}(\bt_2)}\alpha_\jj X^\jj\cr
=\,&\epsilon^{a_1i_1+\cdots+a_ni_n}f(X^\ii)=f(A(X^\ii))=A(f(X^\ii))\cr
=\,&\sum_{\jj=(j_1,\dots,j_n)\in T^{-1}(\bt_2)}\alpha_\jj \epsilon^{a_1j_1+\cdots+a_nj_n}X^\jj.
\end{align*}
If $\jj\not\equiv \ii\pmod{q-1}$, there exists $(a_1,\dots,a_n)\in(\Bbb Z/(q-1)\Bbb Z)^n$ such that $a_1i_1+\cdots+a_ni_n\not\equiv a_1j_1+\cdots+a_nj_n\pmod{q-1}$; it follows from the above that $\alpha_\jj=0$. Therefore, we have
\[
f(X^\ii)=\sum_{\substack{\jj\in T^{-1}(\bt_2)\cr \jj\equiv \ii\,(\text{mod}\,q-1)}}\alpha_\jj X^\jj.
\]

By Lemma~\ref{digit-trans}, we may replace $\ii$ with $\ii'$, where $D(\ii')$ is obtained from $D(\ii)$ through a {\em digit transfer}. Since $0<i_1+\cdots+i_n=r_1<n(q-1)$, by a digit transfer, we may assume that $0<i_1<q-1$. We may further assume that 
\[
i_j
\begin{cases}
\in\{1,\dots,q-2\}&\text{if}\ 1\le j\le k,\cr
=q-1&\text{if}\ k+1\le j\le l,\cr
=0&\text{if}\ l+1\le j\le n,
\end{cases}
\]
where $1\le k\le l\le n$. Then
\[
f(X^\ii)=X_1^{i_1}\cdots X_k^{i_k}g(X_{k+1}^{q-1},\dots,X_n^{q-1}),
\]
where $g(Y_1,\dots,Y_{n-k})\in\f_q[Y_1,\dots,Y_{n-k}]$ is homogeneous of degree $(r_2-i_1-\cdots-i_k)/(q-1)$ and $\deg_{Y_j}g\le 1$ for all $1\le j\le n-k$. We claim that $l<n$ and $\deg_{Y_j}g=1$ for some $l-k<j\le n-k$. Otherwise, 
\[
r_2=i_1+\cdots+i_k+(q-1)\deg g\le i_1+\cdots+i_k+(q-1)(l-k)=r_1,
\]
which is a contradiction. Without loss of generality, assume $\deg_{Y_{n-k}}g=1$. Then
\[
g(X_{k+1}^{q-1},\dots,X_n^{q-1})=X_n^{q-1}g_1(X_{k+1}^{q-1},\dots,X_{n-1}^{q-1})+g_2(X_{k+1}^{q-1},\dots,X_{n-1}^{q-1}),
\]
where $g_1,g_2\in\f_q[Y_1,\dots,Y_{n-k-1}]$ are homogeneous, $g_1\ne 0$, $\deg g_1=\deg g-1$, and $\deg_{Y_j}g_1\le 1$ for all $1\le j\le n-k-1$. Let $A\in\text{GL}(n,\f_q)$ be such that 
\[
(X_1,\dots,X_n)A=(X_1,\dots,X_{n-1},X_n-X_1).
\]
Then
\begin{align}\label{*}\\
&X_1^{i_1}\cdots X_k^{i_k}\bigl(X_n^{q-1}g_1(X_{k+1}^{q-1},\dots,X_{n-1}^{q-1})+g_2(X_{k+1}^{q-1},\dots,X_{n-1}^{q-1})\bigr)\cr
=\,&f(X^\ii)=f(A(X^\ii))=A(f(X^\ii))\cr
=\,&X_1^{i_1}\cdots X_k^{i_k}\bigl((X_n-X_1)^{q-1}g_1(X_{k+1}^{q-1},\dots,X_{n-1}^{q-1})+g_2(X_{k+1}^{q-1},\dots,X_{n-1}^{q-1})\bigr)\cr
=\,&X_1^{i_1}\cdots X_k^{i_k}\Bigl(\Bigl(\sum_{a=0}^{q-1}X_1^aX_n^{q-1-a}\Bigr)g_1(X_{k+1}^{q-1},\dots,X_{n-1}^{q-1})+g_2(X_{k+1}^{q-1},\dots,X_{n-1}^{q-1})\Bigr).\nonumber
\end{align}
Since $i_1<q-1$, there exists $0<a\le q-1-i_1$ such that the sum $a+i_1$ has no carry in base $p$. Let $\jj=(i_1+a,i_2,\dots,i_{n-1},q-1-a)$. Since $(i_1,\dots,i_{n-1},q-1)\in T^{-1}(\bt_2)$ (by assumption), we have $\jj\in T^{-1}(\bt_2)$. Since $X^\jj$ appears on the RHS of \eqref{*} but not on the LHS, we have a contradiction.
\end{proof}

\section{Irreducible Representations of $\text{GL}(n,\f_q)$ over $\f_q$}

The number of irreducible representations of $\text{GL}(n,\f_q)$ over $\f_q$ equals the number of $p$-regular $\f_q$-conjugacy classes of $\text{GL}(n,\f_q)$ (\cite{Berman-DANSSSR-1956, Reiner-PAMS-1964}). The $p$-regular $\f_q$-conjugacy classes of $\text{GL}(n,\f_q)$ are precisely the conjugacy classes of the elements whose elementary divisors are irreducible over $\f_q$. Such conjugacy classes are
parametrized by monic polynomials of degree $n$ over $\f_q$ with nonzero constant term. Therefore, the number of irreducible representations of $\text{GL}(n,\f_q)$ over $\f_q$ equals $q^{n-1}(q-1)$.

When $n=2$, the irreducible representations of $\text{GL}(n,\f_q)$ over $\f_q$ were determined by Brauer and Nesbitt \cite{Brauer-Nesbitt-AM-1941}; also see Barthel and Livn\'e \cite{Barthel-Livne-DMJ-1994}.

For an arbitrary $n$, James and Kerber \cite[Exercise~8.4]{James-Kerber-1981} outlined a method for constructing all irreducible $\f_q\text{GL}(n,\f_q)$-modules using Weyl modules by emulating a construction of irreducible modules over a certain superalgebra by Cater and Lusztig \cite{Carter-Lusztig-MZ-1974}.
However, the outlined construction in \cite{James-Kerber-1981} is not a straightforward adaptation of that of \cite{Carter-Lusztig-MZ-1974}; additional technical steps are needed to prove the claims in the construction of \cite{James-Kerber-1981}. References do not seem to be immediately available and we plan to give a detailed account of the construction in a separate paper. 
The irreducible $\f_q\text{GL}(n,\f_q)$-modules constructed from Weyl modules are not entirely explicit. For example, their dimensions are not known.

The factors $\frak M(\bt)$ of the composition series of $H_q(r,n)$ that we constructed in Section~4 only account for a small portion of the irreducible $\f_q\text{GL}(n,\f_q)$-modules. However, they are explicit, and in particular, their dimensions are known. The corresponding representations of these modules belong to the class of polynomial representations of the general linear group in the sense that the entries of their representation matrices are homogeneous polynomials in the entries of the elements of the general linear group. When $F$ is an infinite field, the irreducible polynomial representations of $\text{GL}(n,F)$ have been determined \cite{Green-LNM-1980}. However, when $F$ is finite, the knowledge of such representations is incomplete. 

\section{Conclusion}

In this paper, we considered two separate questions about the AGL-module structure of the quotient $H_q(r,n)=R_q(r,n)/R_q(r-1,n)$ of two consecutive Reed-Muller codes. In the first question, we proved a duality between $H_q(r,n)$ and $H_q(r',n)$, where $r+r'=n(q-1)$, which generalizes the known result for $q=2$. The general duality is a useful tool for studying $q$-ary functions. In the second question, we determined all submodules of $H_q(r,n)$. This resolves a long-standing question about the affine invariant subcodes of the Reed-Muller code and provides an explicit family of irreducible representations of $\text{GL}(n,\f_q)$ over $\f_q$.




\begin{thebibliography}{99}

\bibitem{Assmus-Key-1998}
E. F. Assmus, Jr. and J. D. Key, 
{\it Polynomial codes and finite geometries}, Handbook of Coding Theory, pp. 1269 -- 1343, North-Holland, Amsterdam, 1998.

\bibitem{Barthel-Livne-DMJ-1994}
L. Barthel and R. Livn\'e,  {\it Irreducible modular representations of $\text{GL}_2$ of a local field}, Duke Math. J. {\bf 75} (1994), 261 -- 292.

\bibitem{Berger-Charpin-DM-1993}
T. Berger and P. Charpin,
{\it The automorphism group of Generalized Reed-Muller codes},
Discrete Math. {\bf 117} (1993), 1 -- 17.

\bibitem{Berger-Charpin-IEEE-IT-1996}
T. Berger and P. Charpin,  {\it The permutation group of affine-invariant extended cyclic codes}, IEEE Trans. Inform. Theory {\bf 42} (1996), 2194 -- 2209.

\bibitem{Berman-DANSSSR-1956}
S. D. Berman, {\it The number of irreducible representations of a finite group over an arbitrary field}, (Russian) Dokl. Akad. Nauk SSSR (N.S.) {\bf 106} (1956), 767 -- 769.

\bibitem{Brauer-Nesbitt-AM-1941}
R. Brauer and C, Nesbitt, {\it On the modular characters of groups}, Ann. of Math. (2) {\bf 42} (1941), 556 -- 590.

\bibitem{Brier-Langevin-ITW-2003}
E. Brier and P. Langevin, {\it Classification of Boolean cubic forms of nine variables}, In: E. Biglieri and V. Tarokh (Eds.), 2003 IEEE Information Theory Workshop (ITW 2003), pp. 179 -- 182, IEEE Press, 2003.


\bibitem{Brier-Langevin-web}
E. Brier and P. Langevin, {\it Cubics in nine variables}, http://langevin.univ-tln.fr/project/cubics/

\bibitem{Carter-Lusztig-MZ-1974}
R. W. Carter and G. Lusztig, {\it On the modular representations of the general linear and symmetric groups}, Math. Z. {\bf 136} (1974), 193 -- 242.

\bibitem{Charpin-Levy-Dit-Vehel-JCTA-1994}
P. Charpin and F. Levy-Dit-Vehel,  {\it On self-dual affine-invariant codes}, J. Combin. Theory A {\bf 67} (1994), 223 -- 244.

\bibitem{Delsarte-IEEE-IT-1970}
P. Delsarte, {\it On cyclic codes that are invariant under the general linear group}, IEEE Trans. Inform. Theory {\bf 16} (1970), 760 -- 769.



\bibitem{Dougherty-Mauldin-Tiefenbruck-arXiv:2106.13910}
R. Dougherty, R. D. Mauldin, M. Tiefenbruck, {\it The covering radius of the Reed-Muller code $RM(m-4,m)$ in $RM(m-3,m)$},  IEEE Trans. Inform. Theory {\bf 68} (2022), 560 -- 571.

\bibitem{Green-LNM-1980}
J. A. Green, {\it Polynomial Representations of $\text{GL}_n$}, 
Lecture Notes in Mathematics, 830, Springer-Verlag, Berlin-New York, 1980. 

\bibitem{Hou-JCTA-1996}
X. Hou, {\it The covering radius of $R(1,7)$ --- a simpler
proof}, J. Combin. Theory A {\bf 74} (1996), 337 -- 341.

\bibitem{Hou-DM-1996}
X. Hou, {\it $\text{\rm GL}(m,2)$ acting on $R(r,m)/R(r-1,m)$},
Discrete Math. {\bf 149} (1996), 99 -- 122.

\bibitem{Hou-JCTA-2005} 
X. Hou, {\it Enumeration of certain affine invariant extended cyclic codes}, J. Combin. Theory A, {\bf 110} (2005), 71 -- 95. 

\bibitem{Hou-IJICT-2010}
X. Hou, {\it Enumeration of ${\rm AGL}(\frac m3,\,{\Bbb F}_{p^3})$-invariant extended cyclic codes},
International Journal of Information and Coding Theory, {\bf 1} (2010) 214 -- 243. 

\bibitem{James-Kerber-1981}
G. James and A. Kerber, {\it The Representation Theory of the Symmetric Group},  Encyclopedia of Mathematics and its Applications 16, Addison-Wesley Publishing Co., Reading, MA, 1981.

\bibitem{Kasami-Lin-Peterson-IC-1968}
T. Kasami, S. Lin, W. W. Peterson,  {\it Some results on cyclic codes which are invariant under the affine group and their applications}, Information and Control {\bf 11} (1968), 475 -- 496.

\bibitem{Key-McDonough-Mavron-DM-2010}
J. D. Key, T. P. McDonough, V. C. Mavron, {\it Reed-Muller codes and permutation decoding}, Discrete Math. {\bf 310} (2010), 3114 -- 3119.

\bibitem{Key-McDonough-Mavron-DM-2017}
J. D. Key, T. P. McDonough, V. C. Mavron, {\it Improved partial permutation decoding for Reed-Muller codes}, Discrete Math. {\bf 340} (2017), 722 -- 728.

\bibitem{Lang-2002}
S. Lang, {\it Algebra}, Springer, New York, 2002.

\bibitem{MacWilliams-BSTJ}
J. MacWilliams, {\it Permutation decoding of systematic codes}, Bell System Techn. J. {\bf 43} (1964), 485 -- 505.

\bibitem{Mortimer-thesis-1977}
B. Mortimer, {\it Some Problems on Permutation Groups: Affine Groups
and Modular Permutation Representations}, Ph.D. Dissertation, Westfield College,
University of London, 1977.

\bibitem{Mykkeltveit-IEEE-IT-1980}
J. J. Mykkeltveit, {\it The covering radius of the $(128,8)$ Reed-Muller code is $56$}, IEEE Trans. Inform. Theory {\bf 26} (1980), 359 -- 362.

\bibitem{Reiner-PAMS-1964}
I. Reiner, {\it On the number of irreducible modular representations of a finite group}, Proc. Amer. Math. Soc. {\bf 15} (1964), 810 -- 812.

\bibitem{Wedderburn-1964}
J. H. M. Wedderburn,  {\it Lectures on Matrices}, Dover Publications, Inc., New York, 1964.


\end{thebibliography}
\end{document}